\documentclass[12pt]{amsart}
\usepackage{amsmath,latexsym,amsfonts,amssymb,amsthm}
\usepackage{tikz-cd}

\usepackage{geometry}
\geometry{a4paper,top=3.5cm,bottom=3.8cm,left=2.5cm,right=2.5cm}
\usepackage{hyperref}
\usepackage{mathrsfs}
\usepackage{graphicx,color}
\usepackage{tikz-cd}
\usepackage{tikz}
\usetikzlibrary{positioning}
\usepackage{enumerate}
\usepackage{mathtools}
\usepackage[all,cmtip]{xy}

\setcounter{tocdepth}{1}

\newtheorem{prop}{Proposition}[section]
\newtheorem{thm}[prop]{Theorem}
\newtheorem{cor}[prop]{Corollary}

\newtheorem{lem}[prop]{Lemma}

\theoremstyle{definition}

\newtheorem{defn}[prop]{Definition}

\newtheorem{remark}[prop]{\it Remark}

\newtheorem*{claim*}{Claim}

\DeclareMathOperator{\Spec}{Spec}

\DeclareMathOperator{\Pic}{Pic}
\DeclareMathOperator{\Cl}{Cl}
\DeclareMathOperator{\ct}{ct}
\DeclareMathOperator{\mct}{mct}

\DeclareMathOperator{\ord}{ord}

\DeclareMathOperator{\sdeg}{s-deg}
\DeclareMathOperator{\Bir}{Bir}

\newcommand{\dashto}{\dashrightarrow}

\begin{document}

\title{Noether-Fano Inequalities and Canonical Thresholds on Fano Varieties}

\author{Charlie Stibitz}
\address{Department of Mathematics, Northwestern University, Evanston, IL}
\email{charles.stibitz@northwestern.edu}

\maketitle

\begin{abstract}
    We prove a more general and precise version of the Noether-Fano inequalities for birational maps between Mori fiber spaces. This is applied to give descriptions of global canonical thresholds on Fano varieties of Picard number one. 
\end{abstract}

\section{Introduction}

\indent The Noether-Fano inequalities have a history going back to the beginnings of birational geometry. Intuitively they say that birational maps between Mori fiber spaces create singular divisors on the two Mori fiber spaces. In their modern form, where one of the varieties is a Fano variety of a Picard number 1, they can be stated as follows using singularities of pairs.

\begin{prop}[Noether-Fano Inequalities \cite{corti1995}] \label{NFinequalitiesCorti}
Suppose that $f:X \dashto Y$ is a birational map which is not an isomorphism from a terminal Fano variety $X$ with $\rho(X) = 1$ to a terminal Mori fiber space $q:Y \to S$. Let $H_{Y}$ be a general element of a very ample linear system  on $Y$ and denote by $H_{X} := f_{\ast}^{-1}H_{Y} \sim -mK_{X}$. Then the pair $\left(X,\frac{1}{m}H_{X}\right)$ is not canonical.
\end{prop}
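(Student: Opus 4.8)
The plan is to argue by contradiction: assume that $\left(X,\tfrac1m H_X\right)$ is canonical, and deduce that $f$ must be an isomorphism. First note that since $H_X$ is a non‑zero effective divisor and $-K_X$ is ample with $\rho(X)=1$ we have $m>0$, so $H_X\sim -mK_X$ gives $K_X+\tfrac1m H_X\sim_{\mathbb Q}0$. Let $\mathcal H_X:=f^{-1}_*|H_Y|$, with general member $H_X$. Take a common resolution: a smooth $W$ dominating $X$ and $Y$ by birational morphisms $p\colon W\to X$, $g\colon W\to Y$ resolving the indeterminacy of $f$, chosen also to be a log resolution of $\mathcal H_X$ (equivalently of $|H_Y|$). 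Let $\widetilde H$ be the strict transform of a general member. Since $|H_Y|$ is very ample, hence base point free, a general $H_Y$ contains the centre of no $g$‑exceptional divisor, so $g^*H_Y=\widetilde H$ and $\ord_F(H_Y)=0$ for every $g$‑exceptional prime $F$.

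Now compute $K_W+\tfrac1m\widetilde H$ in two ways. Over $X$, using that the pair is canonical and $K_X+\tfrac1m H_X\sim_{\mathbb Q}0$,
\[
K_W+\tfrac1m\widetilde H\;=\;p^*\!\big(K_X+\tfrac1m H_X\big)+\!\!\sum_{E\ p\text{-exc.}}\!\! a\!\left(E;X,\tfrac1m H_X\right)E\;=\;N\;\ge\;0 ,
\]
with $N$ supported on $p$‑exceptional divisors. Over $Y$, using $\ord_F(H_Y)=0$ and that $Y$ is terminal,
\[
K_W+\tfrac1m\widetilde H\;=\;g^*\!\big(K_Y+\tfrac1m H_Y\big)+\!\!\sum_{F\ g\text{-exc.}}\!\! a(F;Y)\,F ,\qquad a(F;Y)>0 .
\]
Subtracting and pushing forward along $g$, so that the $g$‑exceptional terms die, yields
\[
K_Y+\tfrac1m H_Y\;\equiv_{\mathbb Q}\;g_*N\;=:\;B_Y\;\ge\;0 ,
\]
an effective $\mathbb Q$‑divisor on $Y$ supported on the prime divisors contracted by $f^{-1}$.

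Next I would bring in the Mori fibration $q\colon Y\to S$. Restricting the last relation to a general fibre $Y_s$ — a Fano variety of Picard number one — gives $K_{Y_s}+\tfrac1m H_Y|_{Y_s}\equiv B_Y|_{Y_s}\ge 0$; since $-K_{Y_s}$ is ample this forces $H_Y|_{Y_s}\equiv m'(-K_{Y_s})$ with $m'\ge m$, and as $\rho(Y/S)=1$ the relation $H_Y\equiv_S m'(-K_Y)$ holds over $S$, so $K_Y+\tfrac1m H_Y$ is even $q$‑nef. To conclude one uses $\rho(X)=1$: comparing on $W$ the two expressions for $g^*(-K_Y)$ with $\widetilde H=g^*H_Y=p^*H_X-\sum_E\ord_E(H_X)\,E$, and invoking the canonicity bound $\ord_E(H_X)\le m\,a(E;X)$, one obtains an identity in $N^1(X)$ of the shape
\[
(m'-m)(-K_X)+m'B'_X+\Theta_X=0 ,
\]
where $B'_X\ge 0$ is supported on the prime divisors of $X$ contracted by $f$ and $\Theta_X$ is the transform to $X$ of a pullback from $S$. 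When $S$ is a point, $\Theta_X=0$, the left side is a non‑negative multiple of the ample class $-K_X$ plus an effective class, so it can vanish only if $m'=m$ and $B'_X=0=B_Y$; a short argument with the negativity lemma then shows $f$ contracts no divisor, and (comparing the numbers of exceptional primes of $p$ and $g$, using $\rho(X)=\rho(Y)=1$) neither does $f^{-1}$, so $f$ is an isomorphism in codimension one. Finally $p^*(-K_X)=g^*(-K_Y)$; both are semiample and their section rings recover $X$, respectively $Y$, so $X\cong Y$ and $f$ is an isomorphism, contradicting the hypothesis.

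The main obstacle is the fibred case $\dim S\ge 1$: there $f$ can never be an isomorphism (the Picard numbers differ), so the argument cannot terminate with "$f$ is an isomorphism," and the term $\Theta_X$ blocks the purely numerical manipulation above. This is exactly the point at which the structure theory of Mori fibre spaces must enter — the behaviour of the $K_Y$‑negative extremal ray, the two‑ray game, and the relative minimal model programme over $S$ — as in Corti's proof of the Noether-Fano inequalities; in that framework the statement is the assertion that no Sarkisov link can be initiated from the Mori fibre space $X$ once the associated canonical threshold is at least one, which is precisely the negation of what we must prove. I would therefore expect the genuine content to reside in this last step rather than in the discrepancy bookkeeping above.
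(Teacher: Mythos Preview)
Your proposal is, by your own admission, incomplete: the case $\dim S\ge 1$ is left open, and you speculate that the two-ray game or the relative MMP over $S$ is where the real work lies. That expectation is off. The paper's argument is entirely uniform in the two cases and uses nothing beyond elementary discrepancy bookkeeping; the case split you are forced into is an artifact of the contradiction strategy, not of the problem.

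The paper does not argue by contradiction. Instead it packages the exceptional data into a single ``weighted valuation'' $\nu_f=\sum_i e_i\,\ord_{E_i}$, where the $E_i$ are the divisors on a common resolution exceptional over $X$ and $e_i\ge 0$ is defined by $\rho_*E_i\sim_{q,\mathbb Q}-e_iK_Y$ (non-negativity is exactly where the Mori fibre space hypothesis on $Y$ is used). Two short pushforward computations then give closed formulas
\[
a(\nu_f)\;:=\;\sum_i e_i\,a(E_i,X)\;=\;\sdeg(f^{-1})-1,\qquad
\nu_f\!\left(\tfrac1m H_X\right)\;=\;\sdeg(f^{-1})-\tfrac{1}{\sdeg(f)},
\]
and a one-line pigeonhole lemma yields $\ct(X,\tfrac1m H_X)\le a(\nu_f)/\nu_f(\tfrac1m H_X)=\dfrac{d'-1}{d'-1/d}$. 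Terminality of $X$ together with $e_i\ge 0$ and the Picard number count force $d'>1$, and the assumption that $f$ is not an isomorphism forces $d>1$; hence the bound is strictly below $1$. No separate treatment of $\dim S=0$ versus $\dim S\ge 1$ ever enters.

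Your discrepancy computation up to $K_Y+\tfrac1m H_Y\sim_{\mathbb Q}B_Y\ge 0$ is essentially the same pushforward as in the paper, just read in the other direction, and for $\dim S=0$ the remainder of your sketch can be made rigorous (though the ``short argument with the negativity lemma'' and the recovery of $X,Y$ from section rings both deserve more care). The genuine gap is that when $\dim S\ge 1$ the divisor $B_Y$ may well be horizontal over $S$, so neither restriction to a fibre nor the numerical identity on $X$ involving $\Theta_X$ closes the argument. The fix is not more MMP; it is to abandon the goal of showing $f$ is an isomorphism and to bound the canonical threshold directly.
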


\indent In fact by \cite{CS08}, the implication can be reversed and from any movable linear system $\mathcal{M} \sim -mK_{X}$ on $X$ we can construct a birational map to a Mori fiber space assuming that $\left(X,\frac{1}{m}\mathcal{M}\right)$ is not canonical. On the other hand, the Noether-Fano inequalities above have no chance of being sharp as they only say $\ct\left(X,\frac{1}{m}\mathcal{M}\right) < 1$. In this paper, we introduce a more precise version of the above inequalities that are in fact sharp.

\begin{thm}\label{NFInequalities}
Suppose that $f:X \dashto Y$ is a birational from a terminal Fano variety of Picard number $1$ to a Mori fiber space $q:Y \to S$. Denote by $d = \sdeg(f)$ and $d' = \sdeg(f^{-1})$ the Sarkisov degrees of $f$ and $f^{-1}$ (see Definition \ref{sdeg}). Then if $D\sim -mK_{X}$ is a divisor contracted by $f$
\[\ct\left(X,\frac{1}{m}D\right) \leq \frac{d'-1}{d'}.\]
Suppose instead $\mathcal{M} = f_{\ast}^{-1}\mathcal{H} \sim -mK_{X}$ is the strict transform of a very ample linear system $\mathcal{H} \sim_{\mathbb{Q}} -\ell K_{Y}$ on $Y$. Then  
\[\ct\left(X,\frac{1}{m}\mathcal{M}\right) \leq \frac{d'-1}{d'-1/d}.\]
\end{thm}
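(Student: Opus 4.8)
The plan is to work on a common resolution of $f$ and push the whole computation down to $Y$ using the Mori fiber structure. Fix a common resolution $p\colon W\to X$, $r\colon W\to Y$ with $W$ $\mathbb{Q}$-factorial and terminal, and let $D'_1,\dots,D'_k$ be the prime divisors on $Y$ contracted by $f^{-1}$; these are exactly the $p$-exceptional, $r$-non-exceptional prime divisors of $W$ (a $p$-exceptional divisor not contracted by $r$ maps birationally onto a divisor of $Y$ whose strict transform on $X$ is $0$, and conversely). Since $\rho(X)=1$ we have $K_W=p^*K_X+\sum_F a_F(X)F=r^*K_Y+\sum_F a_F(Y)F$ with $a_F(X)>0$ on $p$-exceptional $F$, so
\[p^*(-K_X)\equiv r^*(-K_Y)+\sum_F\bigl(a_F(X)-a_F(Y)\bigr)F.\]
By Definition~\ref{sdeg} I will use that $m=d\ell$, and that for any very ample $\mathcal A\sim -aK_X$ on $X$ the system $\mathcal N:=f_*\mathcal A$ satisfies $\mathcal N\equiv_S -ad'K_Y$.

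Next I would record two "mobile part" identities on $W$. Comparing the pullbacks from $X$ and from $Y$ of the pair of linear systems $\mathcal M,\mathcal H$ that correspond under $f$, and using that $\mathcal H$ is base-point free (so its strict transform on $W$ is $r^*\mathcal H$ with no exceptional correction) while $\ord_F\mathcal M=0$ off the $p$-exceptional locus, one gets
\[\bigl(1-\tfrac1d\bigr)r^*(-K_Y)\equiv\sum_F\Bigl(\tfrac1m\ord_F\mathcal M-a_F(X)+a_F(Y)\Bigr)F.\]
Doing the same for $\mathcal A$ and $\mathcal N$, now using that $\mathcal A$ is base-point free (no correction on the $X$-side) and $\ord_F\mathcal N=0$ off the $r$-exceptional locus, gives
\[(d'-1)r^*(-K_Y)\equiv -r^*q^*(\,\cdot\,)+\sum_F\Bigl(\tfrac1a\ord_F\mathcal N+a_F(X)-a_F(Y)\Bigr)F,\]
for a suitable divisor on $S$.

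Now push both identities forward by $r_*$. Every $r$-exceptional divisor dies, and among the remaining prime divisors only the $D'_i$ have nonzero coefficient (for all others the coefficient is identically $0$). Using $a_{D'_i}(Y)=0$, $\ord_{D'_i}\mathcal N=0$ and writing $m_i:=\ord_{D'_i}\mathcal M$, on $Y$ one obtains
\[(d'-1)(-K_Y)\equiv_S\sum_i a_{D'_i}(X)\,D'_i,\qquad\bigl(1-\tfrac1d\bigr)(-K_Y)\equiv_S\sum_i\Bigl(\tfrac{m_i}{m}-a_{D'_i}(X)\Bigr)D'_i.\]
Since $q\colon Y\to S$ is a Mori fiber space, $\rho(Y/S)=1$ and $-K_Y$ is $q$-ample; take a general curve $C$ in a general fibre, so $\kappa:=-K_Y\cdot C>0$, $q^*(\,\cdot\,)\cdot C=0$, and $w_i:=D'_i\cdot C\ge 0$ (each $D'_i$ is either $q$-vertical or $q$-horizontal). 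Intersecting the first relation with $C$, and the second with $C$ and adding, yields
\[\sum_i a_{D'_i}(X)\,w_i=(d'-1)\kappa,\qquad\sum_i m_i\,w_i=m\Bigl(d'-\tfrac1d\Bigr)\kappa.\]
Because each $D'_i$ is exceptional over $X$ with $a_{D'_i}(X)>0$ one has $\ct\bigl(X,\tfrac1m\mathcal M\bigr)\le\min_i\tfrac{m\,a_{D'_i}(X)}{m_i}$, and the weighted mediant inequality $\min_i\tfrac{p_i}{q_i}\le\tfrac{\sum w_ip_i}{\sum w_iq_i}$ (valid once $\sum w_im_i>0$, equivalently $dd'>1$, which follows by intersecting the two "mobile part" identities with $\widetilde C$ and using ampleness of $-K_X$) gives exactly
\[\ct\Bigl(X,\tfrac1m\mathcal M\Bigr)\le\frac{m\sum_i a_{D'_i}(X)w_i}{\sum_i m_i w_i}=\frac{m(d'-1)\kappa}{m(d'-\tfrac1d)\kappa}=\frac{d'-1}{d'-\tfrac1d}.\]
The first inequality of the theorem is the formal limit $d\to\infty$: one runs the identical argument with $\mathcal M$ replaced by the contracted divisor $D$, whose strict transform on $W$ is $r$-exceptional, so there is no "$\mathcal H$ on $Y$" and the factor $1-\tfrac1d$ becomes $1$, producing $\sum_i\ord_{D'_i}(D)w_i=md'\kappa$ and hence the bound $\tfrac{d'-1}{d'}$.

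I expect the main obstacle to be the bookkeeping in the push-forward step: one must verify carefully that $r_*$ of each auxiliary identity collapses to a sum over precisely the $D'_i$ with the asserted coefficients, which relies on base-point-freeness of $\mathcal A$ and $\mathcal H$ to avoid corrections on the "home" varieties and on the vanishing of $\ord_F$ of a movable system along non-exceptional divisors. The second delicate point is the positivity input extracted from the Mori fiber space: that each $D'_i$ is $q$-nef (giving $w_i\ge 0$) and that $\sum_i m_iw_i>0$ so that the mediant step is not vacuous. Finally, the degenerate cases — $d'=1$, $f^{-1}$ contracting no divisor, or all $D'_i$ being $q$-vertical — must be handled separately, typically by showing that in those situations $f$ is forced to be an isomorphism or the claimed bound is trivially satisfied.
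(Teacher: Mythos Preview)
Your argument is correct and is essentially the paper's proof, just unpacked. The paper defines the \emph{weighted valuation} $\nu_f=\sum_i e_i\,\ord_{E_i}$, where $E_i$ ranges over the divisors exceptional over $X$ and $e_i$ is determined by $\rho_*E_i\sim_{q,\mathbb{Q}}-e_iK_Y$; it then proves $a(\nu_f)=d'-1$ (your first displayed identity on $Y$), computes $\nu_f(\tfrac1m D)=d'$ for $D$ contracted and $\nu_f(\tfrac1m\mathcal M)=d'-\tfrac1d$ for $\mathcal M=f_*^{-1}\mathcal H$ (your second identity and its limiting case), and concludes via the elementary lemma $\ct(X,D)\le a(\nu)/\nu(D)$ whenever all weights $e_i\ge 0$, which is exactly your weighted mediant step. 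Your weights $w_i=D'_i\cdot C$ are just $e_i\kappa$, so intersecting with a fiber curve is precisely how one reads off the paper's $e_i$; the two presentations differ only in packaging, not in content. One small wording issue: the reason $w_i\ge 0$ is not that $D'_i$ is ``$q$-nef'' in any intrinsic sense, but simply that $D'_i$ is effective and $\rho(Y/S)=1$ with $-K_Y$ the $q$-ample generator, which forces $D'_i\equiv_S e_i(-K_Y)$ with $e_i\ge 0$. The positivity $d'>1$ (hence $\sum_i m_iw_i>0$ in both cases) is handled in the paper by a one-line observation using terminality of $X$ and effectivity of the $D'_i$, so your concern about degenerate cases is already addressed there.
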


\indent This inequality says that birational maps to Mori fiber spaces lead to singular divisors on $X$ measured by the canonical threshold. In Proposition \ref{computecanonicalthresholds}, we show that given a pair $\left(X,\frac{1}{m}\mathcal{M}\right)$ with $\mathcal{M} \sim -mK_{X}$ these inequalities actually become an equality for the right choice of $f$, assuming that $\ct\left(X,\frac{1}{m}\mathcal{M}\right) < 1$. This naturally leads to the study of global canonical thresholds.

\begin{defn}
Suppose that $X$ is a terminal Fano variety of Picard number one. The \textit{global canonical threshold} is defined to be: \[\ct(X) := \sup\left\{c \mid \left(X,\frac{c}{m}\mathcal{D}\right) \text{ is canonical for every Weil divisor $D\sim -mK_{X}$} \right\}.\]
Similarly the \textit{global movable canonical threshold} is defined to be: 
\[\mct(X) := \sup\left\{c \mid \left(X,\frac{c}{m}\mathcal{M}\right) \text{ is canonical for every movable linear system $\mathcal{M}\sim -mK_{X}$} \right\}.\]
\end{defn}

\indent If we replace canonical thresholds with log canonical thresholds, these invariants are better known. The global log canonical threshold is equivalent to Tian's alpha invariant used to show the existence of K\"{a}hler-Einstein metrics on Fano varieties \cite{tian87}. The movable version and its relation to $K$-stability has been studied in \cite{stibitzzhuang} and \cite{zhu2020higher}. \\
\indent For global canonical thresholds the Noether-Fano inequalities above say that birational superrigidity is equivalent to proving $\mct(X) \geq 1$. Less studied is the invariant $\ct(X)$, though there is a result of Pukhlikov showing that $\ct(X) = 1$ for certain general complete intersections of index $1$ \cite{pukhlikov2018canonical}. In this paper, we give a geometric description of these invariants in terms of the Sarkisov degrees to other Mori fiber spaces. To do so we need to define the following classes of birational maps out of $X$.

\begin{defn}
Given a terminal Fano variety $X$ of Picard number 1, we define the following sets of birational maps from $X$.
\begin{enumerate}
    \item $\mathscr{M} = \{f:X \dashto Y\}$ where $q:Y \to S$ is a Mori fiber space with $\dim(S) \geq 1$.
    \item $\mathscr{F} = \{f:X \dashto Y\}$ where $Y$ is a Fano variety of Picard number 1. 
    \item  $\mathscr{S}_{1} = \{f:X \dashto Y\}$ where $f$ is a Sarkisov link of type I to a terminal Mori fiber space $q:Y \to S$ with $\dim(S) \geq 1$.
    \item $\mathscr{S}_{2} = \{f:X \dashto Y\}$ where $f$ is a Sarkisov link of type II to a terminal Fano variety $Y$ of Picard number 1.
    \item $\mathscr{S}_{2}^{+} = \{f:X \dashto Y\}$ where $f$ is a Sarkisov-like link of type II (see Definition \ref{Slinks}) to a klt Fano variety $Y$ of Picard number 1.
\end{enumerate}

\end{defn}

\indent Using these classes of maps, the Noether-Fano inequalities plus their sharpness imply the following theorem.

\begin{thm}\label{GlobalCanonicalThresholds}
Suppose that $X$ is a terminal Fano variety of Picard number $1$. For a birational map $f:X \dashto Y$ we will write $d = \deg(f)$ and $d' = \deg(f^{-1})$. Suppose that $\ct(X) < 1$ then 
\[\ct(X) = \inf\left\{\frac{d'-1}{d'}|f \in \mathscr{M},\mathscr{F}\right\} = \inf\left\{\frac{d'-1}{d'}|f \in \mathscr{S}_{1},\mathscr{S}_{2},\mathscr{S}_{2}^{+}\right\}.\]
Moreover if $\mct(X) < 1$ as well, then
\begin{align*}
    \mct(X) & =\inf\left\{\frac{d'-1}{d'}|f \in \mathscr{M}\right\}\cup \left\{\frac{d'-1}{d'-1/d}|f \in \mathscr{F}\right\} \\
&= \inf\left\{\frac{d'-1}{d'}|f \in \mathscr{S}_{1}\right\}\cup \left\{\frac{d'-1}{d'-1/d}|f \in \mathscr{S}_{2}\right\}.
\end{align*}
\end{thm}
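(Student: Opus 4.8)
The plan is to prove each of the four equalities by sandwiching $\ct(X)$ (resp.\ $\mct(X)$) between a universal upper bound coming from Theorem~\ref{NFInequalities} and a realization lower bound coming from the sharpness statement Proposition~\ref{computecanonicalthresholds}, and then to force the two displayed infima to agree using that the Sarkisov classes sit inside the geometric ones, $\mathscr{S}_1\cup\mathscr{S}_2\cup\mathscr{S}_2^{+}\subseteq\mathscr{M}\cup\mathscr{F}$, with moreover $\mathscr{S}_1\subseteq\mathscr{M}$ and $\mathscr{S}_2\subseteq\mathscr{F}$. The first step is the tautological reformulation
\[
\ct(X)=\inf_{D}\ \ct\!\left(X,\tfrac1m D\right),\qquad \mct(X)=\inf_{\mathcal{M}}\ \ct\!\left(X,\tfrac1m\mathcal{M}\right),
\]
the infima being over effective Weil divisors $D\sim -mK_X$, respectively movable linear systems $\mathcal{M}\sim -mK_X$; the hypotheses $\ct(X)<1$ and $\mct(X)<1$ are exactly what lets us restrict these infima to objects whose canonical threshold is $<1$, which is the regime in which Proposition~\ref{computecanonicalthresholds} applies.

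For the upper bounds, fix $f$ in one of the classes. If $f\in\mathscr{F}$ (necessarily not an isomorphism), then $f$ contracts a divisor $D\sim -mK_X$ on $X$, since an isomorphism in codimension one between $\mathbb{Q}$-factorial terminal Fano varieties of Picard number one is an isomorphism, and the first bound of Theorem~\ref{NFInequalities} gives $\ct(X)\leq\ct\!\left(X,\tfrac1m D\right)\leq\tfrac{d'-1}{d'}$; feeding the second bound instead with $\mathcal{M}=f_{\ast}^{-1}\mathcal{H}$ for a very ample $\mathcal{H}\sim_{\mathbb{Q}}-\ell K_Y$ yields $\mct(X)\leq\tfrac{d'-1}{d'-1/d}$. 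If $f\in\mathscr{M}$, then $\dim S\geq 1$ and $f$ need not contract a divisor on $X$ (type~I links do not), so one runs the Noether--Fano argument of Theorem~\ref{NFInequalities} on $\mathcal{M}=f_{\ast}^{-1}q^{\ast}\mathcal{A}$, the strict transform of the pullback of a very ample system on $S$: this $\mathcal{M}$ is movable and proportional to $-K_X$, and because its anticanonical content on $Y$ vanishes the resulting bound is simply $\tfrac{d'-1}{d'}$, giving $\ct(X),\mct(X)\leq\tfrac{d'-1}{d'}$. Taking infima over $\mathscr{M}$ and $\mathscr{F}$ gives $\ct(X)\leq\inf\{\tfrac{d'-1}{d'}\mid f\in\mathscr{M},\mathscr{F}\}$ and $\mct(X)\leq\inf(\{\tfrac{d'-1}{d'}\mid f\in\mathscr{M}\}\cup\{\tfrac{d'-1}{d'-1/d}\mid f\in\mathscr{F}\})$, and by the inclusions above these are $\leq$ the corresponding Sarkisov infima.

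For the reverse inequalities, take $D\sim -mK_X$ with $c:=\ct\!\left(X,\tfrac1m D\right)<1$. By Proposition~\ref{computecanonicalthresholds} there is a birational map $f\colon X\dashto Y$ realizing equality in the Noether--Fano inequality, and the construction behind it — extract a divisor $E$ over $X$ with $a\!\left(E;X,\tfrac cm D\right)=0$ to produce a $\mathbb{Q}$-factorial terminal model of relative Picard number two over $X$, then play the two-ray game — exhibits $f$ as a Sarkisov link out of $X$: type~I onto a Mori fibration with $\dim S\geq1$, giving $f\in\mathscr{S}_1$, or type~II onto a Fano variety of Picard number one, giving $f\in\mathscr{S}_2$ if that Fano is terminal and $f\in\mathscr{S}_2^{+}$ otherwise (a general Weil divisor can force the output to be merely klt, which is why $\mathscr{S}_2^{+}$ is needed). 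Hence $c=\tfrac{d'-1}{d'}\geq\inf\{\tfrac{d'-1}{d'}\mid f\in\mathscr{S}_1,\mathscr{S}_2,\mathscr{S}_2^{+}\}$, and taking the infimum over such $D$ bounds $\ct(X)$ below by the Sarkisov infimum; combined with the upper bounds this squeezes $\ct(X)$ equal to both displayed expressions. The movable statement is the same argument with $\mathcal{M}$ in place of $D$: movability forces the terminal model and the output Fano or Mori fibration to remain terminal, so only $\mathscr{S}_1$ and $\mathscr{S}_2$ occur, and the equality produced by Proposition~\ref{computecanonicalthresholds} is $\tfrac{d'-1}{d'}$ for a type~I link and $\tfrac{d'-1}{d'-1/d}$ for a type~II link, matching the two terms in the statement.

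The main obstacle is matching the correct Noether--Fano bound to each class, which the sketch glosses over twice: first, showing that for every $f\in\mathscr{M}$ — in particular the type~I links, which contract no divisor on $X$ and so are invisible to Theorem~\ref{NFInequalities} as literally stated — the pullback of a very ample system from the base $S$ does produce the bound $\tfrac{d'-1}{d'}$ with the right $d'=\sdeg(f^{-1})$; and second, verifying that the map output by Proposition~\ref{computecanonicalthresholds} is genuinely a Sarkisov (or, in the divisorial case, Sarkisov-like) link of exactly the advertised type, together with the identification $\deg=\sdeg$ on these links so that the conventions in Theorems~\ref{NFInequalities} and~\ref{GlobalCanonicalThresholds} agree. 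If the precise link structure is not already built into Proposition~\ref{computecanonicalthresholds}, the fallback is to factor the extremal $f$ through the Sarkisov program and use that the first link out of $X$ has inverse Sarkisov degree no larger than $\sdeg(f^{-1})$, which is all the infimum requires.
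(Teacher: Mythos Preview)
Your proposal is correct and follows essentially the same route as the paper: upper bounds from Theorem~\ref{NFInequalities} applied to contracted divisors (for $\mathscr{F}$), pullbacks from the base (for $\mathscr{M}$), and strict transforms of base-point-free systems (for the $\mct$ bound on $\mathscr{F}$), then lower bounds from Proposition~\ref{computecanonicalthresholds} together with the inclusions $\mathscr{S}_1\subseteq\mathscr{M}$ and $\mathscr{S}_2,\mathscr{S}_2^{+}\subseteq\mathscr{F}$. One small slip: in the lower-bound step you write $c=\tfrac{d'-1}{d'}$, but Proposition~\ref{computecanonicalthresholds} only gives $c=\tfrac{a(\nu_f)}{\nu_f(D/m)}\geq\tfrac{d'-1}{d'}$ (since $\nu_f(D/m)\leq d'$ by effectivity of $f_\ast D$), and similarly $c\geq\tfrac{d'-1}{d'-1/d}$ in the movable case; these inequalities are exactly what the sandwich needs, and the paper states them the same way.
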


\begin{remark}
It is an interesting question to ask if the above infima are in fact minima, or in other words, are the global canonical thresholds computed by a divisor or movable linear system. For the global log canonical threshold this is known to be true by \cite{birkar2016singularities}. On the other hand, for global movable log canonical thresholds  this is still unknown \cite{zhu2020higher}. In the case of canonical thresholds it does not seem to follow immediately from Theorem \ref{GlobalCanonicalThresholds} and boundedness of terminal Fano varieties. For $\ct(X)$ we need to look at $\sdeg(f^{-1})$ where $f\in \mathscr{S}_{2}^{+}$ is a map to a klt Fano variety and in particular there is no way to give a lower bound on the degree. For $\mct(X)$ although $\sdeg(f^{-1})$ can be bounded, it is in theory possible that $\sdeg(f)$ does not have an upper bound. 
\end{remark}

\indent As a first application, we show how to compute $\mct(X)$ for birationally rigid varieties. If all the Sarkisov links out of $X$ are known this is a simple calculation from Theorem \ref{GlobalCanonicalThresholds}. Hence this same computation applies similarly to any variety where all the degrees of the links out are explicitly known. 

\begin{cor}\label{mctrigid}
Suppose that $X$ is a birationally rigid Fano variety with $\Pic(X) = \Cl(X) = \mathbb{Z}\cdot -K_{X}$. Then $\mct(X) > \frac{1}{2}$. Moreover if $\Bir(X)$ is generated by involutions of Sarkisov degree $d$ then $\mct(X) = \frac{d}{d+1} \geq \frac{2}{3}$. 
\end{cor}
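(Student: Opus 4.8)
The plan is to feed birational rigidity into Theorem~\ref{GlobalCanonicalThresholds}. If $X$ is birationally superrigid then, by the equivalence recalled in the introduction, $\mct(X)\geq 1>\tfrac12$ and we are done; so assume $X$ is not superrigid, which by the same equivalence gives $\mct(X)<1$, putting Theorem~\ref{GlobalCanonicalThresholds} in force. Since $X$ is the only terminal Mori fibre space in its birational class, there is no Sarkisov link of type~I out of $X$, i.e.\ $\mathscr{S}_{1}=\varnothing$, and every terminal Fano variety of Picard number one birational to $X$ is isomorphic to $X$; hence every $f\in\mathscr{S}_{2}$ is a type~II self-link of $X$, and the second formula of Theorem~\ref{GlobalCanonicalThresholds} degenerates to
\[
\mct(X)\;=\;\inf\left\{\frac{d'-1}{d'-1/d}\ \middle|\ f\in\mathscr{S}_{2}\right\}.
\]
Because $\Cl(X)=\mathbb{Z}\cdot(-K_{X})$ and the source and target of $f$ are both $X$, the strict transform of any very ample linear system is linearly equivalent to an integral multiple of $-K_{X}$; thus $d=\sdeg(f)$ and $d'=\sdeg(f^{-1})$ are integers, and $d,d'\geq 2$ since $f$ is not an isomorphism.

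For the bound $\mct(X)>\tfrac12$, observe that for integers $d,d'\geq 2$
\[
\frac{d'-1}{d'-1/d}\;>\;\frac{d'-1}{d'}\;\geq\;\frac12,
\]
and this quantity is already $\geq\tfrac23$ once $d'\geq 3$, while for $d'=2$ it equals $\tfrac{d}{2d-1}$. So the infimum can be $\tfrac12$ only if $\mathscr{S}_{2}$ contains links with $\sdeg(f^{-1})=2$ and $\sdeg(f)$ unbounded, and ruling this out---equivalently, bounding $\sdeg(f)$ over the type~II self-links of minimal inverse degree---is the heart of the matter, which I expect to be the main obstacle. A link with $\sdeg(f^{-1})=2$ arises from a divisorial extraction of smallest admissible discrepancy over a rather special centre; analysing its two-ray game together with boundedness of terminal Fano varieties of Picard number one should bound the complexity of the entire link, and in particular produce a bound $\sdeg(f)\leq N=N(X)$. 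Granting this, every term of the infimum is at least $\min\!\left(\tfrac23,\tfrac{N}{2N-1}\right)>\tfrac12$, so $\mct(X)>\tfrac12$.

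For the last assertion, the hypothesis that $\Bir(X)$ is generated by involutions of Sarkisov degree $d$ means, through the Sarkisov program for the rigid variety $X$, that the type~II self-links of $X$ are---up to composition with $\mathrm{Aut}(X)$---exactly these involutions; in particular $\mct(X)<1$ and the displayed infimum ranges over this set. For an involution $\iota$ we have $\iota=\iota^{-1}$, hence $\sdeg(\iota)=\sdeg(\iota^{-1})=d$ and the associated term equals
\[
\frac{d-1}{\,d-1/d\,}\;=\;\frac{d(d-1)}{d^{2}-1}\;=\;\frac{d}{d+1}.
\]
As every term of the infimum is this one number, $\mct(X)=\tfrac{d}{d+1}$; and $\tfrac{d}{d+1}\geq\tfrac23$ is exactly the inequality $d\geq 2$, which holds since the involutions are not isomorphisms. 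Apart from the boundedness input in the previous paragraph the argument is purely formal, resting only on Theorem~\ref{GlobalCanonicalThresholds} and the monotonicity of $t\mapsto\tfrac{t-1}{t-1/d}$.
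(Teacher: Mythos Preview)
Your overall route is the paper's: dispose of the superrigid case, use rigidity to make $\mathscr{S}_{1}=\varnothing$ and reduce Theorem~\ref{GlobalCanonicalThresholds} to an infimum over type~II self-links $f:X\dashto X$, use $\Cl(X)=\mathbb{Z}\cdot(-K_{X})$ together with Proposition~\ref{degbound} to force $d,d'\in\mathbb{Z}_{\geq 2}$, and handle the involution clause by setting $d=d'$. All of this matches the paper line for line.

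The divergence is your middle paragraph. You correctly notice that the term-wise bound $\frac{d'-1}{d'-1/d}>\tfrac{1}{2}$ does not, on its own, force the \emph{infimum} to exceed $\tfrac{1}{2}$: links with $d'=2$ and $d\to\infty$ would drive it down to $\tfrac{1}{2}$. You then try to rule this out by a boundedness claim on $\sdeg(f)$ among links of minimal inverse degree, which you do not prove---the sketch invoking two-ray games and boundedness of terminal Fanos is a hope, not an argument. So your proof, as written, has a genuine gap at exactly the place you flag.

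The paper's proof does not engage with this point at all: it records $\frac{d'-1}{d'-1/d}>\frac{d'-1}{d'}\geq\tfrac{1}{2}$ for each link and passes directly to $\mct(X)>\tfrac{1}{2}$ via Theorem~\ref{GlobalCanonicalThresholds}. Your worry about the infimum is legitimate, but the paper is content with the term-wise inequality and supplies no finiteness or boundedness input of the kind you are reaching for (indeed, the remark after Theorem~\ref{GlobalCanonicalThresholds} explicitly leaves open whether the infimum is attained). If your aim is to reproduce the paper's argument, drop the boundedness paragraph and state the term-wise inequality; if your aim is to actually close the gap you have identified, you will need a real finiteness statement for the relevant links, and nothing in this paper provides one.
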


\indent It would be interesting to try to use the links to give lower bounds for $\ct(X)$ as well. This would in particular also give bounds for $\alpha(X)$. Unfortunately, there does not seem to be a clear way to use birational geometry to restrict the class $\mathscr{S}_{2}^{+}$, of Sarkisov-like links of type II out of a variety $X$. Even for birationally superrigid Fano varieties, this is an interesting question. For instance, proving that $\sdeg(f^{-1}) \geq \frac{1}{2}$ for any birational map $f:X \dashto Y$, where $X$ is birationally superrigid and $f \in \mathscr{S}_{2}^{+}$, would prove the K-stability of $X$ \cite{stibitzzhuang}. \\
\indent In the opposite direction though, we can use conclusions about $\ct(X)$ to restrict larger classes of birational maps out of $X$. For instance, we can look at Fano varieties of Picard number $1$ with $\ct(X) = 1$. Such Fano varieties are birationally superrigid, and therefore have no birational maps to terminal Fano varieties of Picard number one or Mori fiber spaces with positive dimensional bases. The following says that they also do not have any birational map to Fano varieties of Picard number 1 with worse singularities, so they have no birational map to any Mori fiber space no matter the singularities. Such Fano varieties are known to exist. For example, in \cite{pukhlikov2018canonical}, general complete intersections of index 1 with certain restrictions on the degree and codimension are shown to satisfy $\ct(X) = 1$.

\begin{cor}\label{ct1}
Suppose that $X$ is a terminal Fano variety of Picard number $1$ such that $\ct(X) = 1$. Then there exists no birational map $f:X \dashto Y$ where $Y$ is a Fano variety of Picard number 1, regardless of the singularities on $Y$. 
\end{cor}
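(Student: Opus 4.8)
The plan is to show that any birational map $f\colon X\dashto Y$ with $Y$ a Fano variety of Picard number one is necessarily an isomorphism; since $X$ is terminal this forces $Y$ terminal, and discarding the trivial possibility $Y\cong X$ there is then no such $f$ at all. So assume $f$ is not an isomorphism and seek a contradiction. Note first that the case $Y$ terminal is already immediate: $\mct(X)\ge\ct(X)=1$ says $X$ is birationally superrigid, and a terminal Fano of Picard number one birational to a birationally superrigid $X$ is isomorphic to $X$.

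The main step is short. Regard $Y$ as a Mori fiber space over a point — only $-K_Y$ ample, $\rho(Y)=1$, and normality of $Y$ enter, which is precisely why the singularities of $Y$ play no role. Pick a general very ample linear system $\mathcal H$ on $Y$ and set $\mathcal M=f^{-1}_{\ast}\mathcal H\sim -mK_X$. Since $\mct(X)\ge\ct(X)=1$, the pair $\bigl(X,\tfrac1m\mathcal M\bigr)$ is canonical, i.e.\ $\ct\bigl(X,\tfrac1m\mathcal M\bigr)\ge 1$. On the other hand Theorem \ref{NFInequalities} gives $\ct\bigl(X,\tfrac1m\mathcal M\bigr)\le\frac{d'-1}{d'-1/d}$, where $d=\sdeg(f)$ and $d'=\sdeg(f^{-1})$. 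If $d>1$ the right-hand side is $<1$ (since $\frac{d'-1}{d'-1/d}<1$ is equivalent to $1/d<1$), contradicting $\ct\bigl(X,\tfrac1m\mathcal M\bigr)\ge1$; hence $d=\sdeg(f)=1$. For a birational map between Fano varieties of Picard number one, Sarkisov degree one means $f$ is an isomorphism, and we are done.

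The one point that needs care is whether Theorem \ref{NFInequalities} — and the definition of Sarkisov degree itself — is legitimately available when $Y$ is allowed arbitrary singularities; the prudent route is to first reduce to $Y$ klt. This reduction is supplied by Theorem \ref{GlobalCanonicalThresholds}: if $\mathscr{S}_1\cup\mathscr{S}_2\cup\mathscr{S}_2^{+}$ were nonempty, a link $g$ in it would have $\sdeg(g^{-1})>1$, and Theorem \ref{NFInequalities} would produce a Weil divisor, or a movable linear system, on $X$ of canonical threshold $<1$, contradicting $\ct(X)=1$; so these classes are empty and $X$ has no nontrivial birational map to a klt Fano of Picard number one. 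To see that a possibly non-klt $Y$ cannot occur either, run the computation of the previous paragraph on a common log resolution $p\colon W\to X$, $q\colon W\to Y$: base point freeness of $\mathcal H$ makes the strict transform of $\mathcal M$ equal to $q^{\ast}\mathcal H$, and comparing $K_W=p^{\ast}K_X+\sum\alpha_iG_i=q^{\ast}K_Y+\sum\beta_iG_i$ with $p^{\ast}M-q^{\ast}H=\sum\mu_iG_i$, then using $M\sim_{\mathbb Q}-mK_X$, $H\sim_{\mathbb Q}-\ell K_Y$, canonicity of $\bigl(X,\tfrac1m\mathcal M\bigr)$ (so $m\alpha_i\ge\mu_i$ for $p$-exceptional $G_i$), terminality of $X$ (so $\alpha_i>0$), and ampleness of $-K_Y$, one is forced to $m=\ell$, whereupon the resulting $q$-exceptional, $\mathbb Q$-linearly trivial divisor vanishes by the negativity lemma and $\beta_i=\alpha_i-\mu_i/m=a_{G_i}\bigl(X,\tfrac1m\mathcal M\bigr)\ge0$ for all $i$. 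Thus $f$ is crepant for the log Calabi–Yau pairs $\bigl(X,\tfrac1m\mathcal M\bigr)$ and $\bigl(Y,\tfrac1{\ell}\mathcal H\bigr)$, the discrepancies of $Y$ agree with those of $\bigl(X,\tfrac1m\mathcal M\bigr)$, and in particular $Y$ is klt — indeed canonical — so we are back in the first case.

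I expect this last reduction to be the real obstacle. Unlike the classical Noether–Fano situation there is no a priori bound on the singularities of $Y$, so the only numerical input on the $Y$-side is the ampleness of $-K_Y$; squeezing out the equality $m=\ell$ requires balancing the canonicity inequalities for $\bigl(X,\tfrac1m\mathcal M\bigr)$ on $X$ against the terminality of $X$ applied along the divisors extracted by $f$ (i.e.\ contracted by $f^{-1}$), and this seems to genuinely need that both $X$ and $Y$ have Picard number one. Everything else — the reduction via the Sarkisov classes, the passage from $\sdeg(f)=1$ to $f$ being an isomorphism, and the transfer of canonicity across a crepant birational map — is routine once this is in hand.
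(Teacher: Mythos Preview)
Your approach is far more elaborate than necessary, and it has genuine gaps. The paper's proof is three lines and uses the \emph{first} inequality in Theorem~\ref{NFInequalities} (for contracted divisors), not the second. Since both $X$ and $Y$ are $\mathbb{Q}$-factorial of Picard number one and $f$ is not an isomorphism, $f$ contracts some divisor $D\sim -mK_X$; because $X$ is terminal and at least one $e_i>0$, Proposition~\ref{degformula} gives $d'=\sdeg(f^{-1})>1$, and then
\[
\ct\Bigl(X,\tfrac{1}{m}D\Bigr)\le\frac{d'-1}{d'}<1,
\]
contradicting $\ct(X)=1$. Your worry about whether Theorem~\ref{NFInequalities} applies for bad $Y$ is misplaced: Proposition~\ref{GeneralNFInequalities} needs only that $X$ be canonical and that all $e_i\ge 0$, and the latter follows from $-K_Y$ ample and $\rho_\ast E_i$ effective, with no constraint whatsoever on the singularities of $Y$.

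Your route via the movable inequality runs into real trouble. From $\ct(X,\tfrac{1}{m}\mathcal{M})\ge 1$ and $\ct(X,\tfrac{1}{m}\mathcal{M})\le\frac{d'-1}{d'-1/d}$ you can only conclude $d\le 1$, not $d=1$; and even granting $d=1$, the assertion ``Sarkisov degree one means $f$ is an isomorphism'' is not justified. By Proposition~\ref{degformula} one has $d=1+a(\nu_{f^{-1}})=1+\sum f_j\,a(F_j,Y)$, and when $Y$ is not terminal the discrepancies $a(F_j,Y)$ may be nonpositive, so $d\le 1$ says nothing about whether $f$ contracts divisors. Your reduction step also has a gap: emptiness of $\mathscr{S}_1\cup\mathscr{S}_2\cup\mathscr{S}_2^{+}$ only says there is no Sarkisov-like \emph{link} from $X$ to a klt Fano of Picard number one, not that there is no birational map at all; the sentence ``$X$ has no nontrivial birational map to a klt Fano of Picard number one'' therefore does not follow, and your final clause ``so we are back in the first case'' lands in a case you have not actually settled. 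The simple fix is to abandon the movable-system route and argue with the contracted divisor as above.
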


\indent In a similar direction, we can prove something for a larger class of birationally superrigid varieties with a weaker conclusion.

\begin{cor}\label{bsrmaps}
Suppose that $X$ is  birationally superrigid Fano variety with $\Pic(X) = \Cl(X) = \mathbb{Z}\cdot -K_{X}$. Suppose that $Y$ is another Fano variety of Picard number 1 with a base-point-free linear system $\mathcal{H}_{Y} \sim_{\mathbb{Q}} -\ell K_{Y}$ with $\ell < 1$. Then there exists no birational map $f:X \dashto Y$.
\end{cor}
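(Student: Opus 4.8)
The plan is to argue by contradiction and feed the hypothesis $\ell<1$ into the sharp Noether--Fano inequality of Theorem~\ref{NFInequalities}. Suppose $f\colon X\dashto Y$ is a birational map, and set $\mathcal M:=f_{\ast}^{-1}\mathcal H_Y$, the strict transform on $X$ of $\mathcal H_Y$. Since $\mathcal H_Y$ is base-point-free it is movable, and it is nonzero (a nonzero effective divisor class on the Fano variety $Y$ is a positive multiple of $-K_Y$, so $\ell>0$); hence $\mathcal M$ is a nonzero movable linear system on $X$. Because $\Cl(X)=\mathbb Z\cdot(-K_X)$ and $-K_X$ is ample, a nonzero effective Weil divisor on $X$ is a positive integral multiple of $-K_X$, so $\mathcal M\sim -mK_X$ for some integer $m\ge 1$. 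By Definition~\ref{sdeg} (the Sarkisov degree is scale invariant and can be read off from any linear system) one has $d:=\sdeg(f)=m/\ell$, and since $\ell<1\le m$ this already forces $d>1$. In particular $f$ is not an isomorphism, an isomorphism having Sarkisov degree $1$.

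I would then play two estimates for $\ct\!\left(X,\tfrac1m\mathcal M\right)$ against each other. On the one hand, birational superrigidity of $X$ is equivalent to $\mct(X)\ge 1$ (Proposition~\ref{NFinequalitiesCorti} together with its converse from \cite{CS08}), so $(X,\tfrac1m\mathcal M)$ is canonical, i.e. $\ct\!\left(X,\tfrac1m\mathcal M\right)\ge 1$. On the other hand, $Y$ is a Fano variety of Picard number one, hence a Mori fiber space over a point, and $\mathcal H_Y$ is base-point-free, so the second inequality of Theorem~\ref{NFInequalities} applies to $f$ with $\mathcal H_Y$ playing the role of the polarizing system on $Y$ --- its proof only uses that general members of the chosen system are no more singular than $Y$ itself, which Bertini supplies for any base-point-free system, not just a very ample one. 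This gives
\[
\ct\!\left(X,\tfrac1m\mathcal M\right)\ \le\ \frac{d'-1}{\,d'-1/d\,},\qquad d':=\sdeg(f^{-1}).
\]

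To finish, I would use the elementary fact that $\sdeg(f)\cdot\sdeg(f^{-1})\ge 1$ for every birational map between Fano varieties of Picard number one, with equality exactly when $f$ is an isomorphism: on a common resolution $p\colon W\to X$, $q\colon W\to Y$, the quantities $d$ and $d'$ are the normalized mixed intersection numbers of the nef classes $p^{\ast}(-K_X)$ and $q^{\ast}(-K_Y)$, and the inequality together with its equality case is just the Khovanskii--Teissier inequality for these two classes. As $f$ is not an isomorphism, $dd'>1$, hence $d'>1/d$, and therefore, using $d>1$,
\[
\frac{d'-1}{\,d'-1/d\,}\ =\ 1-\frac{1-1/d}{\,d'-1/d\,}\ <\ 1
\]
since $1-1/d>0$ and $d'-1/d>0$. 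Combined with the two facts above this yields $1\le\ct\!\left(X,\tfrac1m\mathcal M\right)<1$, a contradiction, so no such $f$ exists. The only delicate points are the extension of Theorem~\ref{NFInequalities} from very ample to base-point-free polarizing systems --- one cannot just replace $\mathcal H_Y$ by a very ample system in the same ray without destroying the bound $\ell<1$ --- and the inequality $\sdeg(f)\sdeg(f^{-1})\ge 1$ with its equality characterization, which is what converts ``$\tfrac{d'-1}{d'-1/d}\ge 1$'' into an impossibility; the rest is bookkeeping, the key input being that $\ell<1$ forces $\sdeg(f)>1$.
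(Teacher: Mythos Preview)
Your argument is correct and structurally identical to the paper's: both show $d=\sdeg(f)>1$ from the integrality constraint $m=d\ell\in\mathbb Z_{\ge 1}$ together with $\ell<1$, then bound $d'=\sdeg(f^{-1})$ from below so that $\tfrac{d'-1}{d'-1/d}<1$, and conclude by contradicting $\mct(X)\ge 1$. Your remark that Theorem~\ref{NFInequalities} only needs $\mathcal H_Y$ base-point-free rather than very ample is correct and is exactly what the paper uses implicitly (Proposition~\ref{GeneralNFInequalities}(2) is already stated that way).

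The one place you diverge is in bounding $d'$. You go through Khovanskii--Teissier to get $dd'>1$, hence $d'>1/d$, and then note that with $d>1$ this forces $\tfrac{d'-1}{d'-1/d}<1$. This works, but the equality case ``$dd'=1\Leftrightarrow f$ is an isomorphism'' is not literally the Khovanskii--Teissier statement: equality for big nef classes gives numerical proportionality $p^{\ast}(-K_X)\equiv\lambda\,q^{\ast}(-K_Y)$, and one still has to argue that this forces $p$ and $q$ to contract the same curves and hence that $f$ is an isomorphism. The paper sidesteps all of this: once $d>1$ shows $f$ is not an isomorphism, Proposition~\ref{degformula} gives $d'=1+\sum_i e_i\,a(E_i,X)$ with each $a(E_i,X)>0$ (terminality of $X$), each $e_i\ge 0$ (since $Y$ is Fano), and some $e_i>0$ (since $f$ contracts a divisor), so $d'>1$ directly. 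That stronger bound $d'>1$ makes $\tfrac{d'-1}{d'-1/d}<1$ immediate and avoids importing any Hodge-index-type input.
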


\indent In particular, the above corollary can be applied to the case when $Y$ is any Fano hypersurface of index $> 2$. Then the corollary says that a birationally superrigid variety has no birational map to any Fano hypersurface of index $\geq 2$ no matter the singularities. \\
\indent The applications and form of the Noether-Fano inequalities has evolved steadily over time. In 1869 Noether claimed that birational automorphisms of $\mathbb{P}^{2}$ factor as a product of quadratic one. He claimed this follows from the inequality
\[\nu_{1} + \nu_{2} + \nu_{3} \geq d,\]
where $\nu_{i}$ denote multiplicities of the strict transform of a general line under the map (including infinitely near multiplicities) and $d$ denotes the degree of the map. Although Noether proved the inequality, the proof of the factorization of Cremona transformation into quadratic ones 
 was not completely settled until Castelnuovo. Moreover, Castelnuovo proved that any birational map between either $\mathbb{P}^{2}$'s or ruled surfaces factors into a sequence of blow ups $\mathbb{P}^{2} \dashto \mathbb{F}_{1} $, elementary transformations $\mathbb{F}_{k} \dashto \mathbb{F}_{k\pm 1}$, contractions $\mathbb{F}_{1} \to \mathbb{P}^{2}$, and involutions $\mathbb{F}_{0} \to \mathbb{F}_{0}$. \\
\indent In dimension three, Fano in 1915 made the first attempt to generalize Noether's inequality. He correctly proved forms of the inequalities, where there are multiple cases for the multiplicity to be considered. He applied this to study birational maps of what are now termed Fano threefolds, though there were some flaws in his methods. In 1971, Iskovskikh and Manin \cite{IM71} fixed the flaws in Fano's argument and were able to prove that any birational map between smooth quartic threefolds is an isomorphism. \\
\indent The factorization theorem of Casteluovo was also extended to higher dimensions. Sarkisov and Reid proposed a way to factorize birational maps between threefold Mori fiber spaces. Using the Noether-Fano inequalities and a degree related to the Sarkisov degree used here, Corti was able to prove the factorization in dimension 3 \cite{corti1995}. This also lead to the modern form of Noether-Fano inequalities as in Proposition \ref{NFinequalitiesCorti}. In higher dimensions, the Sarkisov program is now known by \cite{hmsarkisov}, though the methods are very different. \\
\indent In another direction, the results of Iskovskikh and Manin have been generalized to many other varieties as well. Thisled to the definition of a variety being birationally superrigid, meaning it has no birational map to a Mori fiber space which is not an isomorphism. Many index one Fano varieties of Picard number one are now known to be birationally superrigid. In particular, all smooth index one hypersurfaces $X \subseteq \mathbb{P}^{N}$ are superrigid by \cite{df2013}. Outside of index $1$ not much is known except in some special cases (e.g. \cite{pukhlikovindex2}). For a more complete summary of the history of the Noether-Fano inequalities see \cite{iskovskikh2004noether}. 

\subsection*{Acknowledgements}
The author would like to thank J\'{a}nos Koll\'{a}r and Mihnea Popa for helpful comments on this paper.

\section{The Noether-Fano Inequalities}

Throughout this section, we will consider a birational map $f:X \dashto Y$ of normal $\mathbb{Q}$-factorial varieties over a algebraically closed field $k$ of characteristic $0$. We will moreover assume that each of the varieties $X$ and $Y$ has a projective fibration, denoted by $p:X \to T$ and $q:Y \to S$, such that $\rho(X/T) = \rho(Y/S) = 1$, with $-K_{X}$ (resp. $-K_{Y}$) generating the relative Picard group $\Pic(X/T)\otimes \mathbb{Q}$ (resp. $\Pic(Y/S) \otimes \mathbb{Q}$). Although our main focus will be when $X \to T$ and $Y \to S$ are Mori fiber spaces, we will be able to consider the inequalities in the following generalizations: $X$ and $Y$ may have worse than terminal (or klt) singularities, $K_{X}$ and $K_{Y}$ may be relatively ample, and the maps $p,q$ may be birational. It is also possible to consider the case where $-K_{X}$ or $-K_{Y}$ is relatively trivial, though we must choose a generator of the relative Picard group. Our main example will be the following situation. 

\begin{defn}
A Mori fiber space $p:X \to T$ is a dominant morphism of normal projective varieties where
\begin{itemize}
    \item $p_{\ast}\mathcal{O}_{X} = \mathcal{O}_{T}$ and $\dim(T) < \dim(X)$;
    \item $X$ is $\mathbb{Q}$-factorial;
    \item $\rho(X/T) =1$;
    \item $-K_{X}$ is $p$-ample. 
\end{itemize}
\end{defn}
\indent Usually an assumption on the singularities of $X$ is also made (e.g. $X$ is either terminal or klt). We will specify the singularities these singularities as needed. Moreover some of the results here  will apply with no assumptions at all on the singularities of $X$. \\
\indent Going back to the general setup, consider a resolution $\Gamma$ of the graph of $f$ and denote by $\pi:\Gamma \to X$ and $\rho:\Gamma \to Y$ the projections. Let $E_{i}$ be the collection $\pi$-exceptional divisors and $F_{j}$ be the collection of $\rho$ exceptional divisors. We define numbers $e_{i}$, $f_{j}$ by the linear equivalence $\rho_{\ast}E_{i} \sim_{q,\mathbb{Q}} -e_{i}K_{Y}$ and $\pi_{\ast}F_{j} \sim_{q,\mathbb{Q}} -f_{j}K_{X}$. \\
\indent We now define the notion of degree for the birational maps, $f:X \dashto Y$, considered above. We use the name Sarkisov degree based on the similar definition in \cite{corti1995}.

\begin{defn} \label{sdeg} Suppose that $f:X \dashto Y$ is a birational map of   normal varieties as above.  Then we define the \textit{Sarkisov degree} of $f$, denoted by $\sdeg(f)$ as follows. Consider general elements $A,B$ of very ample linear systems on $Y$ such that $A-B \sim_{\mathbb{Q}} -\ell K_{Y}$ with $\ell \neq 0$. Then we can write the strict transform $f_{\ast}^{-1}(A-B) \sim_{\mathbb{Q},p} -m K_{X}$. Define 
\[\sdeg(f) := \frac{m}{\ell} \in \mathbb{Q.}\]
\end{defn}

\indent If $-K_{Y}$ is semiample the definition simplifies: we may take a general element of $|-\ell K_{Y}|$ for $m >> 0$ and consider its strict transform which is linearly equivalent to $-m K_{X}$ over $T$.  

\begin{remark}
When $X = Y = \mathbb{P}^{N}$ and $S,T = \Spec(k)$, $\sdeg(f)$ is equal to the usual degree of the Cremona transformation $f$. In general, this is based of the definition in \cite{corti1995} where a related invariant also termed the Sarkisov degree is used to prove any birational map of threefold Mori fiber spaces factors into special links. This version of the Sarkisov degree differs in two ways from that of Corti. First, when $-K_{Y}$ is not ample, Corti fixes a very ample linear system $\mathcal{H} \sim_{q} -\ell K_{Y}$, and uses its strict transform to define the degree. Second, Corti's definition of Sarkisov degree consists of two extra invariants that are used to prove the Sarkisov program in dimension 3.  
\end{remark}

\indent We now define a way to measure the singularities of divisors on $X$ (or $Y$). This takes the form of a weighted sum of valuations on $X$ determined by the divisors $E_{i}$ and weights $e_{i}$ (or $F_{j}$ and $f_{j}$). 

\begin{defn} [Weighted Valuation associated to a birational map] Suppose that $f:X \dashto Y$ is a birational map of varieties as above. Define the \textit{weighted valuation} $\nu_{f}$ associated to $f$ to be the function
\[\nu_{f} = \sum_{i}e_{i}\ord_{E_{i}},\]
where $E_{i}$ ranges over the $\pi:\Gamma \to X$ exceptional divisors and $\rho_{\ast}E_{i} \sim_{q,\mathbb{Q}} -e_{i}K_{Y}$
\end{defn}

\indent This weighted valuation is not necessarily a valuation as it does not satisfy $\nu_{f}(a+b) \geq \min(\nu_{f}(a),\nu_{f}(b))$. In fact, it is just a homomorphism from $K(X)^{\times} \to \mathbb{R}$. Nevertheless it will be useful to think of it as a weighted average  of the valuations $\ord_{E_{i}}$. With this in mind, we define a weighted discrepancy as follows.

\begin{defn} [Discrepancy of weighted valuation] Suppose that $\nu = \sum_{i}e_{i}\ord_{E_{i}}(\cdot)$ is a weighted valuation, where $e_{i} \in \mathbb{R}$, on a variety $X$ such that $K_{X}$ is $\mathbb{Q}$-Cartier. Then we define the discrepancy $a(\nu)$ of $\nu$ to be the sum 
\[a(\nu) := \sum_{i}e_{i}a(E_{i},X).\]
\end{defn}

\indent We will now show how $\sdeg(f^{-1})$ relates to the discrepancy of $\nu_{f}$. 

\begin{prop} \label{degformula}
Suppose that $f:X \dashto Y$ is a birational map as above. Then 
\[
\sdeg(f^{-1}) = 1 + a(\nu_{f})
\]
\end{prop}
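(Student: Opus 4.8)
The plan is to compute both sides in terms of data on the resolution $\Gamma$ and match them via the relative canonical bundle formula. First I would pull back a very ample divisor $A$ on $Y$ (or rather a difference $A - B \sim_{\mathbb{Q}} -\ell K_Y$ as in Definition \ref{sdeg}, normalizing $\ell = 1$ by $\mathbb{Q}$-linearity) and write on $\Gamma$
\[
\rho^{*}(-K_Y) = \pi^{*}(-\sdeg(f^{-1})\,K_X) + \sum_i c_i E_i,
\]
where the sum runs over the $\pi$-exceptional divisors $E_i$ (since $\rho^{*}(-K_Y)$ and $\pi^{*}(-\sdeg(f^{-1})K_X)$ have the same pushforward to $X$, namely $\sdeg(f^{-1})\cdot(-K_X)$ by the very definition of the Sarkisov degree of $f^{-1}$: the strict transform under $f^{-1}$ of $-K_X$-type divisors computes $\sdeg(f^{-1})$). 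I need to identify the coefficients $c_i$. The key observation is that pushing this equality forward by $\rho$ kills the terms $\pi^*(\cdots)$ that are $\rho$-exceptional but not the $E_i$; more usefully, I should instead compare with the analogous discrepancy formulas for $K_\Gamma$ over both $X$ and $Y$.

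So the main computation: write $K_\Gamma = \pi^{*}K_X + \sum_i a(E_i, X) E_i + (\text{terms supported on }\pi\text{-non-exceptional divisors, i.e. none})$, and likewise $K_\Gamma = \rho^{*}K_Y + \sum_j a(F_j,Y) F_j$. Subtracting,
\[
\pi^{*}K_X - \rho^{*}K_Y = \sum_j a(F_j,Y)F_j - \sum_i a(E_i,X)E_i.
\]
Now apply $\rho_{*}$. On the left, $\rho_{*}\pi^{*}K_X$ is the strict transform construction governing $\sdeg(f)$ in the reverse direction — but I actually want $\sdeg(f^{-1})$, so the cleaner move is to note $\rho_{*}E_i \sim_{q,\mathbb{Q}} -e_i K_Y$ by definition of the $e_i$, and $\rho_{*}F_j = 0$, and $\rho_{*}\rho^{*}K_Y = K_Y$. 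The left side: $\rho_{*}\pi^{*}(-K_X)$ is, by Definition \ref{sdeg} applied to $f^{-1}$ (whose strict transform sends a general $-K_X$-divisor to something $\sim_{\mathbb{Q},q} -\sdeg(f^{-1})\cdot(-K_Y)$... wait, need to be careful with direction), equal to $\sdeg(f^{-1})\cdot(-K_Y)$ up to the $q$-relative equivalence, modulo $\rho$-exceptional correction — but $\rho_{*}$ of anything is not $\rho$-exceptional, so that correction vanishes after pushforward. Pushing the right side forward: $\rho_{*}\sum_j a(F_j,Y)F_j = 0$ and $\rho_{*}\sum_i a(E_i,X)E_i = \sum_i a(E_i,X)(-e_i K_Y) = -a(\nu_f)K_Y$. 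Assembling: $-\sdeg(f^{-1})(-K_Y) - (-K_Y) \sim_{q,\mathbb{Q}} -(-a(\nu_f)K_Y)$, i.e. $\sdeg(f^{-1}) - 1 = a(\nu_f)$, which rearranges to the claim. I should double-check the sign bookkeeping and confirm that the $q$-relative equivalences upgrade to equalities of the rational numbers $\sdeg, e_i, a(E_i)$ because $\Pic(Y/S)\otimes\mathbb{Q}$ is generated by $-K_Y$.

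The main obstacle I anticipate is precisely this sign/normalization bookkeeping: keeping straight which pushforward ($\pi_*$ vs $\rho_*$) computes $\sdeg(f)$ versus $\sdeg(f^{-1})$, handling the fact that $\rho$-exceptional terms vanish under $\rho_*$ but $\pi$-exceptional ones do not, and verifying that $\rho_*\pi^*(-K_X)$ really equals $\sdeg(f^{-1})\cdot(-K_Y)$ in $\Pic(Y/S)\otimes\mathbb{Q}$ directly from Definition \ref{sdeg} (this needs that the strict transform of a general $-K_X$-divisor, as a Weil divisor on $Y$, is $\rho_*$ of its total transform on $\Gamma$ minus $\rho$-exceptional pieces, which drop out). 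A secondary subtlety is that $\Gamma$ need not be smooth, only that $K_\Gamma$, $K_X$, $K_Y$ are $\mathbb{Q}$-Cartier enough for the discrepancy formulas to make sense; since the statement only involves $a(E_i,X)$ and the $e_i$ intrinsically, I can pass to a common resolution where everything is defined, and the weighted sums are independent of the choice. Once the signs are pinned down the rest is a one-line manipulation.
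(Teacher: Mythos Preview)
Your proposal is correct and follows essentially the same route as the paper: both arguments use the discrepancy formula $K_\Gamma \sim_{\mathbb{Q}} \pi^*K_X + \sum_i a(E_i,X)E_i$, push forward by $\rho$, and read off the coefficient of $-K_Y$ using $\rho_*E_i \sim_{q,\mathbb{Q}} -e_iK_Y$. The only cosmetic difference is that the paper tracks an explicit general divisor $A-B\sim -mK_X$ through (so that strict transform coincides with pullback), whereas you work directly at the level of classes and invoke $\rho_*\pi^*(-K_X)\sim_{q,\mathbb{Q}}\sdeg(f^{-1})(-K_Y)$ as a black box---your anticipated ``main obstacle'' is exactly the step the paper makes explicit, and your introduction of the $a(F_j,Y)$ terms is harmless since they vanish under $\rho_*$.
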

\begin{proof}
Consider general elements of very ample linear systems $A,B$ such that $A-B \sim -mK_{X}$ for some $m$. Then the strict transform $A_{\Gamma}-B_{\Gamma} := \pi^{-1}_{\ast}(A-B) \sim -m\pi^{\ast}K_{X} \sim -mK_{\Gamma} + \sum_{i}ma_{i}E_{i} $ where $a_{i} = a(E_{i},X)$. 
Pushing forward to $Y$ gives 
\[f_{\ast}(A-B) \sim \rho_{\ast}(A_{\Gamma}-B_{\Gamma}) \sim -mK_{Y} + \sum_{i}ma_{i}\rho_{\ast}E_{i} \sim_{q,\mathbb{Q}} -m\left(1 + \sum_{i}a_{i}e_{i}\right)K_{Y} \]
\end{proof}

\indent In a similar manner, given any divisor $D\sim_{\mathbb{Q}} -mK_{X}$ we can determine what multiple of $-K_{Y}$ the strict transform, $f_{\ast}D$ is linearly equivalent to.

\begin{prop} \label{Pushforward}
Suppose that $D \sim_{\mathbb{Q}} -mK_{X}$ is any Weil divisor on $X$. Then 
\[f_{\ast}D \sim_{q,\mathbb{Q}} -m\left(\sdeg(f^{-1}) - \nu_{f}\left(\frac{1}{m}D\right)\right)K_{Y}\]
\end{prop}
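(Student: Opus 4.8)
The plan is to mimic the proof of Proposition \ref{degformula}, but now tracking an arbitrary divisor $D$ rather than a difference $A-B$ of very ample divisors. First I would pass to the resolution $\Gamma$ of the graph of $f$ with projections $\pi : \Gamma \to X$ and $\rho : \Gamma \to Y$. Write $D_{\Gamma} := \pi^{-1}_{\ast}D$ for the strict transform on $\Gamma$. Since $D \sim_{\mathbb{Q}} -mK_{X}$ and $K_{X}$ is $\mathbb{Q}$-Cartier, pulling back gives
\[
\pi^{\ast}D \sim_{\mathbb{Q}} -m\pi^{\ast}K_{X} \sim_{\mathbb{Q}} -mK_{\Gamma} + \sum_{i} m a_{i} E_{i},
\]
where $a_{i} = a(E_{i}, X)$ and the $E_{i}$ range over the $\pi$-exceptional divisors. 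The key bookkeeping step is the relation between $\pi^{\ast}D$ and the strict transform $D_{\Gamma}$: by definition of the order of vanishing we have $\pi^{\ast}D = D_{\Gamma} + \sum_{i} \ord_{E_{i}}(D) E_{i}$, so that
\[
D_{\Gamma} \sim_{\mathbb{Q}} -mK_{\Gamma} + \sum_{i} m a_{i} E_{i} - \sum_{i} \ord_{E_{i}}(D) E_{i}.
\]

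Next I would push this forward to $Y$ via $\rho$. Since $\rho$ is birational, $\rho_{\ast} K_{\Gamma} \sim_{q,\mathbb{Q}} K_{Y}$ (the $\rho$-exceptional $F_{j}$ contribute nothing after pushforward), $\rho_{\ast} D_{\Gamma} = f_{\ast} D$, and $\rho_{\ast} E_{i} \sim_{q,\mathbb{Q}} -e_{i} K_{Y}$ by the defining property of the weights $e_{i}$. Applying $\rho_{\ast}$ to the displayed equivalence therefore yields
\[
f_{\ast}D \sim_{q,\mathbb{Q}} -mK_{Y} + \sum_{i} m a_{i}(-e_{i}K_{Y}) - \sum_{i} \ord_{E_{i}}(D)(-e_{i}K_{Y})
= -m\Bigl(1 + \sum_{i} a_{i} e_{i} - \frac{1}{m}\sum_{i} e_{i}\ord_{E_{i}}(D)\Bigr)K_{Y}.
\]
Now I recognize the two sums: $1 + \sum_{i} a_{i} e_{i} = 1 + a(\nu_{f}) = \sdeg(f^{-1})$ by Proposition \ref{degformula}, and $\sum_{i} e_{i}\ord_{E_{i}}(D) = \nu_{f}(D)$, which by linearity of $\nu_{f}$ equals $m \cdot \nu_{f}\bigl(\tfrac{1}{m}D\bigr)$. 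Substituting gives exactly
\[
f_{\ast}D \sim_{q,\mathbb{Q}} -m\Bigl(\sdeg(f^{-1}) - \nu_{f}\bigl(\tfrac{1}{m}D\bigr)\Bigr)K_{Y},
\]
as claimed.

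The only genuinely delicate point is that $\nu_{f}(D)$, a priori defined as a sum over \emph{all} $\pi$-exceptional divisors $E_{i}$ on $\Gamma$, correctly records $\ord_{E_{i}}(D)$ even though some of these $E_{i}$ may fail to be $\rho$-exceptional; but this is fine, since the weight $e_{i}$ is defined precisely so that $\rho_{\ast}E_{i} \sim_{q,\mathbb{Q}} -e_{i}K_{Y}$ for every $i$ (with $e_{i}$ possibly zero), and $\nu_f$ is by definition the sum over exactly this set. One should also note that the statement is independent of the chosen resolution $\Gamma$, which follows because both $\sdeg(f^{-1})$ and $\nu_{f}$ are defined independently of $\Gamma$ (or can be checked to be so by comparing two resolutions dominated by a common third). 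I expect the main obstacle to be purely notational: making sure the rational coefficients $m a_{i} - \ord_{E_{i}}(D)$ are correctly assembled and that one does not double-count the contribution of divisors $E_{i}$ that happen to also be among the $F_{j}$. Beyond that, the argument is a routine divisor-theoretic computation parallel to Proposition \ref{degformula}.
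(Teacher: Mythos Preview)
Your proposal is correct and follows essentially the same argument as the paper: pass to the resolution $\Gamma$, write $D_{\Gamma} \sim_{\mathbb{Q}} -mK_{\Gamma} + \sum_{i}(ma_{i}-\ord_{E_{i}}(D))E_{i}$, push forward by $\rho$, and identify the resulting sums with $\sdeg(f^{-1})$ and $\nu_{f}$. The paper's proof is more terse and omits the justifications you add (such as $\rho_{\ast}K_{\Gamma} \sim_{q,\mathbb{Q}} K_{Y}$ and independence of the resolution), but the computation is the same.
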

\begin{proof}
Let $b_{i} = \ord_{E_{i}}(D)$. Then the strict transform $D_{\Gamma}$ of $D$ on $\Gamma$ satisfies
\[D_{\Gamma}\sim_{\mathbb{Q}} \pi^{\ast}D - \sum b_{i}E_{i} \sim_{\mathbb{Q}} - mK_{X} + \sum_{i}(m a_{i}-b_{i})E_{i}.\] 
Pushing this divisor forward to $Y$ gives
\[f_{\ast}D \sim_{q,\mathbb{Q}} -mK_{Y} +\sum_{i}e_{i}(ma_{i}-b_{i})K_{Y} \sim_{q,\mathbb{Q}} -m (\sdeg(f^{-1}) - \nu_{f}(D/m))K_{Y}.\]
\end{proof}

\indent In the following important case, we can prove that $\sdeg(f^{-1}) > 1$.

\begin{prop}\label{degbound}
Suppose that $f:X \dashto Y$ is a birational map where $X$ is a terminal Fano variety with $\rho(X) = 1$ and  $q:Y \to S$ is any Mori fiber space. Then $\sdeg(f^{-1}) > 1$.
\end{prop}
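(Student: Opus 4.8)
The plan is to use Proposition \ref{degformula}, which gives $\sdeg(f^{-1}) = 1 + a(\nu_f)$, and reduce the claim to showing $a(\nu_f) > 0$, i.e.\ that the weighted discrepancy $\sum_i e_i\, a(E_i, X)$ is strictly positive. Since $X$ is terminal, every $\pi$-exceptional divisor $E_i$ satisfies $a(E_i, X) > 0$, so the only thing that could go wrong is the sign of the weights $e_i$. Recall $\rho_* E_i \sim_{q,\mathbb{Q}} -e_i K_Y$, and since $-K_Y$ is $q$-ample and $\rho(Y/S) = 1$, the class of any nonzero effective divisor on $Y$ that is not $q$-vertical is a positive multiple of $-K_Y$; so $e_i \geq 0$ for every $i$, with $e_i = 0$ exactly when $\rho_* E_i$ is $q$-vertical (in particular when $E_i$ is $\rho$-exceptional, i.e.\ when $\rho_* E_i = 0$).

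The first step is therefore to observe $a(\nu_f) = \sum_i e_i\, a(E_i, X) \geq 0$, and that to get strict inequality it suffices to exhibit a single index $i_0$ with $e_{i_0} > 0$, equivalently a $\pi$-exceptional divisor $E_{i_0}$ whose pushforward $\rho_* E_{i_0}$ is nonzero and not $q$-vertical. Suppose for contradiction that no such divisor exists, i.e.\ every $\pi$-exceptional $E_i$ is either $\rho$-exceptional or has $q$-vertical image on $Y$. The second step is to derive a contradiction from this by comparing how $-K_X$ and the pullback of an ample (or $q$-ample) class behave across the correspondence $\Gamma$. Concretely: take a very ample $H_Y$ on $Y$ (so $H_Y \sim_{\mathbb{Q}} -\ell K_Y + (\text{$q$-vertical})$ for some $\ell > 0$ after adjusting, or just work $q$-relatively), and pull back $H_Y$ to $\Gamma$; the divisor $\rho^* H_Y$ is $\pi$-nef and big, and its pushforward $\pi_* \rho^* H_Y = f^{-1}_* H_Y$ is a nonzero effective divisor on $X$, hence $\sim_{\mathbb{Q}} -mK_X$ with $m > 0$ since $\rho(X) = 1$ and $-K_X$ is ample. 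Writing $\rho^* H_Y = \pi^*(-mK_X) + \sum_i c_i E_i$ on $\Gamma$ and pushing forward by $\rho$, the assumption that all $\rho_* E_i$ are $q$-vertical forces $f^{-1}_* H_Y$ to pull back, modulo $q$-vertical divisors, to something proportional to $-K_Y$ only through the $c_i$-corrections — this should be organized so that the hypothesis collapses the relation and yields $m \leq 0$ or $\ell \leq 0$, contradicting ampleness. An alternative and perhaps cleaner route: if $f$ is not an isomorphism in codimension one then some $F_j$ ($\rho$-exceptional) has $\pi_* F_j \neq 0$, i.e.\ $f_*^{-1}$ of that divisor on $Y$ is extracted; running the symmetric argument with $f^{-1}$ and the identity $\sdeg(f^{-1}) = 1 + a(\nu_f)$ should close the remaining case where $f$ is an isomorphism in codimension one but not globally, since then $Y$ has Picard number one and $f$ genuinely moves the anticanonical class.

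The main obstacle I anticipate is handling the degenerate-looking case where $\Gamma \to X$ and $\Gamma \to Y$ have "matched" exceptional loci, so that naively every relevant pushforward is either zero or vertical; here one must use genuinely that $f$ is not an isomorphism together with $\rho(X) = 1$ and the Mori-fiber-space structure on $Y$ to produce the one divisor with $e_{i_0} > 0$. The cleanest formulation is probably: if $e_i = 0$ for all $i$, then $\nu_f \equiv 0$, so Proposition \ref{degformula} gives $\sdeg(f^{-1}) = 1$, and Proposition \ref{Pushforward} then shows $f_* D \sim_{q,\mathbb{Q}} -mK_Y$ for every $D \sim_{\mathbb{Q}} -mK_X$ with no correction — combined with the analogous statement for $f^{-1}$ (whose weighted valuation is built from the $F_j$, and whose degree is likewise forced to be $1$ if all $f_j$ vanish), one concludes $f$ and $f^{-1}$ preserve linear equivalence of all divisors, which together with $\rho(X) = 1$ forces $f$ to be an isomorphism, contradiction. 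So the structure is: reduce to $a(\nu_f) \geq 0$ by terminality and positivity of the $e_i$; then rule out equality by showing $\nu_f \equiv 0$ would make $f$ an isomorphism. I expect the positivity $e_i \geq 0$ to be immediate and the "$\nu_f \equiv 0 \Rightarrow f$ iso" step to require the most care.
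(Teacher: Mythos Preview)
Your plan matches the paper's proof through the main steps: invoke Proposition~\ref{degformula} to write $\sdeg(f^{-1}) = 1 + \sum_i e_i\, a(E_i,X)$, use terminality of $X$ for $a(E_i,X) > 0$, and use that $-K_Y$ is $q$-ample together with effectivity of $\rho_*E_i$ for $e_i \ge 0$. The only remaining point is that not all $e_i$ vanish, and here the paper is much more direct than your sketch: it simply observes that $\rho(Y) \ge 2$, and this together with $\rho(X)=1$ forces some $e_i \neq 0$.

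Your route to this last point has a gap. You propose that if all $e_i = 0$ then, by the analogous formula for $f^{-1}$, one also has all $f_j = 0$, and from this that $f$ is an isomorphism. But nothing justifies the passage from ``all $e_i = 0$'' to ``all $f_j = 0$'': the $f_j$ record whether the $\rho$-exceptional divisors $F_j$ survive as divisors on $X$, and the vanishing of the $e_i$ says nothing about that. Without this step the isomorphism conclusion does not follow. Your earlier idea---pulling a divisor back from $Y$, or better from $S$ via $q$, and comparing---is actually a valid way to finish: if all $e_i=0$ then for general ample $H$ on $S$ the strict transform $D' := f^{-1}_* (q^* H)$ is effective and nonzero on $X$, hence $D'\sim_{\mathbb Q} -m' K_X$ with $m'>0$; but Proposition~\ref{Pushforward} with $\nu_f\equiv 0$ and $\sdeg(f^{-1})=1$ gives $q^*H = f_* D' \sim_{q,\mathbb Q} -m' K_Y$, contradicting $q$-ampleness of $-K_Y$. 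The paper's Picard-rank observation is the quicker route, however.
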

\begin{proof}
This follows from Proposition \ref{degformula}. As $X$ is terminal $a(E_{i},X) > 0$ for each $E_{i}$. Moreover as $Y$ is a Mori fiber space $e_{i} \geq 0$. As $\rho(Y) \geq 2$, all the $e_{i}$ cannot be $0$. 
\end{proof}

\indent Using the pushforward formula above we can prove the following theorem which can be interpreted as a general form of the Noether-Fano inequalities. Moreover, using the weighted valuation $\nu_{f}$ the inequalities become equalities.

\begin{thm} \label{NFEqualities}
Suppose that $f:X \dashto Y$ is as above. Consider $D$ an effective $\mathbb{Q}$-Weil divisor on $X$ with $D \sim_{\mathbb{Q}} -mK_{X}$. Then 
\begin{enumerate}
\item $\nu_{f}\left(\frac{1}{m}D\right) = \sdeg(f^{-1}) = 1 + a(\nu_{f})$ if and only if $f_{\ast}D \sim_{q,\mathbb{Q}} 0$;
\item $\nu_{f}\left(\frac{1}{m}D\right) = \sdeg(f^{-1}) - \frac{1}{\sdeg(f)} = 1 + a(\nu_{f}) - \frac{1}{\sdeg(f)}$ for $D$ the strict transform under $f^{-1}$ of a general element of a base point free linear system $\mathcal{H} \sim_{\mathbb{Q}} -\ell K_{Y}$ (if such a linear system exists).  
\end{enumerate}
\end{thm}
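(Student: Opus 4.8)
The plan is to read both statements directly off the pushforward formula of Proposition~\ref{Pushforward} together with the degree formula of Proposition~\ref{degformula}. Recall that for any Weil divisor $D\sim_{\mathbb{Q}}-mK_X$ (with $m\neq 0$, which we may assume since otherwise $D=0$), Proposition~\ref{Pushforward} reads
\[
f_{\ast}D \sim_{q,\mathbb{Q}} -m\Bigl(\sdeg(f^{-1})-\nu_f\bigl(\tfrac1m D\bigr)\Bigr)K_Y ,
\]
and Proposition~\ref{degformula} gives $\sdeg(f^{-1})=1+a(\nu_f)$. The latter identity already shows that the two right-hand expressions appearing in each of (1) and (2) coincide, so in both cases it suffices to establish the first equality (resp.\ the equivalence).

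For (1): since $-K_Y$ is a nonzero element of $\Pic(Y/S)\otimes\mathbb{Q}$ (it is a $q$-ample generator) and $m\neq 0$, the class $f_{\ast}D$ is $q$-linearly trivial over $\mathbb{Q}$ if and only if the rational coefficient $\sdeg(f^{-1})-\nu_f(\tfrac1m D)$ vanishes. This is exactly the asserted equivalence, and then $\sdeg(f^{-1})=1+a(\nu_f)$ by Proposition~\ref{degformula}.

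For (2): write $D=f_{\ast}^{-1}H$ for $H$ a general member of $\mathcal{H}$. A general member of a base point free linear system contains none of the finitely many prime divisors contracted by $f^{-1}$, so $H$ is not $f^{-1}$-exceptional, and hence $f_{\ast}D=f_{\ast}f_{\ast}^{-1}H=H\sim_{q,\mathbb{Q}}-\ell K_Y$. Comparing this with the pushforward formula yields $\ell=m\bigl(\sdeg(f^{-1})-\nu_f(\tfrac1m D)\bigr)$, i.e.
\[
\nu_f\bigl(\tfrac1m D\bigr)=\sdeg(f^{-1})-\frac{\ell}{m}.
\]
Finally, $D$ is by construction the strict transform of a general element of a base point free linear system of class $-\ell K_Y$, and it satisfies $D\sim_{p,\mathbb{Q}}-mK_X$; by the definition of the Sarkisov degree (Definition~\ref{sdeg}, in the simplified form valid when $-K_Y$ is semiample, or by linearity of the strict-transform operation on the relevant divisor classes) this forces $m/\ell=\sdeg(f)$, so $\ell/m=1/\sdeg(f)$. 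Substituting and using Proposition~\ref{degformula} once more gives $\nu_f(\tfrac1m D)=\sdeg(f^{-1})-1/\sdeg(f)=1+a(\nu_f)-1/\sdeg(f)$.

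The only step that is not a pure substitution is the identity $f_{\ast}D=H$ in (2), together with the matching $m/\ell=\sdeg(f)$: both rest on $\mathcal{H}$ being base point free, so that a general member avoids all the small loci introduced by the indeterminacy of $f$ and $f^{-1}$, and on taking $D$ to be the strict transform of such a \emph{general} member. I expect this to be the main (mild) obstacle; everything else follows immediately from Propositions~\ref{Pushforward} and~\ref{degformula}.
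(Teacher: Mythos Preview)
Your proof is correct and follows essentially the same route as the paper: both parts are read off Proposition~\ref{Pushforward} together with Proposition~\ref{degformula}. The only cosmetic difference is in (2): the paper phrases the step $m/\ell=\sdeg(f)$ as ``applying Proposition~\ref{Pushforward} to $f^{-1}$'' (using that $\nu_{f^{-1}}(H)=0$ for a general member of a base point free system), whereas you invoke the definition of $\sdeg(f)$ directly; the content is the same.
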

\begin{proof}
\indent For the first equality we note that by Proposition \ref{Pushforward}, $f_{\ast}D \sim_{q,\mathbb{Q}} 0$ exactly when $\nu_{f}(D/m) = \sdeg(f^{-1}) = 1 + a(\nu_{f})$. \\
\indent For the second equality we apply Proposition \ref{Pushforward} to both $f$ and $f^{-1}$. Then by pushing forward twice we conclude
\[1 = \sdeg(f)\cdot (\sdeg(f^{-1})-\nu_{f}(D/m)).\]
Solving for $\nu_{f}(D/m)$ gives the desired equality.
\end{proof}

From Theorem \ref{NFEqualities}, we can also make conclusions about the canonical thresholds of the divisors considered in the above proposition. To make sense of the canonical threshold we at the very least need to start with a variety with at worst canonical singularities. We first recall the definition of canonical threshold. 

\begin{defn}
Consider a normal variety $X$ with canonical singularities and let $D$ be a Weil $\mathbb{Q}$-divisor on $X$. Then the \textit{canonical threshold} is the number
\[\ct(X,D) = \sup\{t \in \mathbb{R}| (X,cD) \text{ is canonical}\}.\]
\end{defn}
 
 The following allows us to give an upper bound on canonical thresholds in terms of the degrees of $f$ and $f^{-1}$.

\begin{prop}\label{GeneralNFInequalities}
Suppose that $f:X \dashto Y$ is as above where $X$ has canonical singularities and all $e_{i} \geq 0$. Consider $D \sim_{\mathbb{Q}} -mK_{X}$ a $\mathbb{Q}$-Weil divisor on $X$. Then we have the following bounds.
\begin{enumerate}
    \item If $f_{\ast}D \sim_{q,\mathbb{Q}} 0 $ then 
    \[\ct\left(X,\frac{1}{m}D\right) \leq \frac{\sdeg(f^{-1})-1}{\sdeg(f^{-1})}.\]
    \item If $D = f_{\ast}^{-1}H$ where $H \sim_{\mathbb{Q}} -mK_{Y}$ is a general element of a base point free linear system, then 
    \[\ct\left(X,\frac{1}{m}D\right) \leq \frac{\sdeg(f^{-1})-1}{\sdeg(f^{-1}) - \frac{1}{\sdeg(f)}}\]
\end{enumerate}
\end{prop}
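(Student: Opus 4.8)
The plan is to deduce both inequalities from the equalities in Theorem~\ref{NFEqualities} by comparing the weighted valuation $\nu_f$ with the canonical threshold. The key observation is that $\nu_f = \sum_i e_i \ord_{E_i}$ is a nonnegative combination (since all $e_i \geq 0$) of the very divisorial valuations $\ord_{E_i}$ that witness whether a pair is canonical. So if $(X, cD)$ is canonical, then for each $\pi$-exceptional $E_i$ we have the inequality $\ord_{E_i}(cD) \leq a(E_i, X)$, i.e. $c\,\ord_{E_i}(D) \leq a(E_i,X)$. Writing $D \sim_{\mathbb Q} -mK_X$ and multiplying the $i$-th inequality by $e_i \geq 0$ and summing, I get
\[
c\,\nu_f\!\left(\tfrac{1}{m}D\right) \;=\; \frac{c}{m}\sum_i e_i\,\ord_{E_i}(D) \;\leq\; \sum_i e_i\, a(E_i,X) \;=\; a(\nu_f).
\]
(Here one must be slightly careful that canonicity of $(X,cD)$ only directly bounds $\ord_{E_i}(cD)$ for those $E_i$ actually appearing; but the $E_i$ are precisely the $\pi$-exceptional divisors on the chosen resolution $\Gamma$, and for each such $E_i$ the discrepancy condition $a(E_i, X, cD)\geq 0$ reads exactly $a(E_i,X) - c\,\ord_{E_i}(D) \geq 0$, so the summation is legitimate.) Taking the supremum over canonical $c$ yields $\ct(X, \tfrac1m D) \cdot \nu_f(\tfrac1m D) \leq a(\nu_f)$, provided $\nu_f(\tfrac1m D) > 0$.

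For part~(1), Theorem~\ref{NFEqualities}(1) gives $\nu_f(\tfrac1m D) = 1 + a(\nu_f) = \sdeg(f^{-1})$ whenever $f_\ast D \sim_{q,\mathbb Q} 0$, and $a(\nu_f) = \sdeg(f^{-1}) - 1$. Substituting,
\[
\ct\!\left(X, \tfrac1m D\right) \;\leq\; \frac{a(\nu_f)}{\nu_f(\tfrac1m D)} \;=\; \frac{\sdeg(f^{-1}) - 1}{\sdeg(f^{-1})},
\]
which is the first bound. For part~(2), Theorem~\ref{NFEqualities}(2) gives $\nu_f(\tfrac1m D) = \sdeg(f^{-1}) - \tfrac{1}{\sdeg(f)}$ while $a(\nu_f) = \sdeg(f^{-1}) - 1$ as before, so
\[
\ct\!\left(X, \tfrac1m D\right) \;\leq\; \frac{\sdeg(f^{-1}) - 1}{\sdeg(f^{-1}) - \frac{1}{\sdeg(f)}},
\]
the second bound.

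The main obstacle — really the only subtlety — is justifying positivity of $\nu_f(\tfrac1m D)$ so that division is allowed, and ruling out the degenerate case where no $E_i$ appears (i.e. $f$ is a morphism near the generic point and $\nu_f \equiv 0$). In the situations of interest $f$ is not an isomorphism and $Y$ has larger Picard number, so $\nu_f \not\equiv 0$ and $D$ (being a strict transform of something relatively nontrivial on $Y$) has positive $\ord_{E_i}$ along some $E_i$ with $e_i > 0$; this is exactly the content packaged in Proposition~\ref{degbound} in the Fano case, and more generally follows from the hypothesis that $f_\ast D \not\sim_{q,\mathbb Q} -mK_Y \cdot(\text{full degree})$. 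I would state the positivity hypothesis cleanly (or simply note the bound is vacuous when $\nu_f(\tfrac1m D) = 0$) and otherwise the argument is the short averaging computation above. One should also remark that the inequality direction is consistent: since $a(\nu_f) \geq 0$ and $\sdeg(f^{-1}) > 1$ in the relevant cases, both right-hand sides lie in $(0,1)$, matching the expectation that $f$ forces $(X,\tfrac1m D)$ to be strictly non-canonical.
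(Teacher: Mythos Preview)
Your argument is correct and matches the paper's approach: the paper isolates the inequality $\ct(X,D)\leq a(\nu)/\nu(D)$ as a separate lemma (proved by the same averaging over the $e_i\ord_{E_i}$ that you carry out directly), and then substitutes the values of $a(\nu_f)$ and $\nu_f(\tfrac1m D)$ from Proposition~\ref{degformula} and Theorem~\ref{NFEqualities}. Your caveat about positivity of $\nu_f(\tfrac1m D)$ is a reasonable remark that the paper leaves implicit.
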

\begin{proof}
This follows by applying the  following lemma using the fact that $a(\nu_{f}) = \sdeg(f^{-1}) - 1$ and Theorem \ref{NFEqualities}.  
\end{proof}

\begin{lem}
Suppose that $\nu = \sum_{i}e_{i}\ord_{E_{i}}$ is a weighted valuation on a $\mathbb{Q}$-factorial variety $X$ with all $e_{i} \geq 0$. Then for any divisor $D$ on $X$
\[\ct(X,D) \leq \frac{a(\nu)}{\nu(D)}\]
\end{lem}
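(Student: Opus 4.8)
The plan is to reduce the statement to the definition of canonicity applied to a single well-chosen divisor among the $E_i$. Recall that $(X,D)$ is canonical means that for every divisor $E$ over $X$ we have $a(E,X) - \ord_E(D) \geq 0$, i.e. $a(E,X) \geq \ord_E(D)$. So if $(X, cD)$ is canonical, then $a(E_i, X) \geq c\,\ord_{E_i}(D)$ for every $i$. The idea is that this forces $c$ to be small whenever some $\ord_{E_i}(D)$ is large relative to $a(E_i,X)$, and the weighted average packaging just repackages finitely many such inequalities.

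First I would note the edge cases: if $\nu(D) = \sum_i e_i \ord_{E_i}(D) = 0$ the right-hand side is $+\infty$ (or undefined) and there is nothing to prove, so assume $\nu(D) > 0$; in particular some $e_{i_0} > 0$ with $\ord_{E_{i_0}}(D) > 0$. Next, by definition of $\ct$, for every $c < \ct(X,D)$ the pair $(X, cD)$ is canonical, hence $c\,\ord_{E_i}(D) \leq a(E_i, X)$ for all $i$. Multiply the $i$-th inequality by $e_i \geq 0$ and sum over $i$: since all $e_i \geq 0$ this preserves the inequality, giving $c\sum_i e_i \ord_{E_i}(D) \leq \sum_i e_i a(E_i,X)$, i.e. $c\,\nu(D) \leq a(\nu)$. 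Since $\nu(D) > 0$ we get $c \leq a(\nu)/\nu(D)$, and letting $c \to \ct(X,D)$ yields $\ct(X,D) \leq a(\nu)/\nu(D)$.

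There is essentially no main obstacle here — the lemma is a formal consequence of the definition of canonical singularities together with the nonnegativity of the weights. The only point requiring a sentence of care is the standard fact that canonicity of $(X, cD)$ can be tested on the divisors $E_i$ appearing in $\nu$ (they are among all divisors over $X$, so this is immediate), and the observation that summing the individual inequalities with nonnegative coefficients $e_i$ is valid; one should also remark that at least one term is strictly positive so that the division at the end is legitimate. I would also implicitly use that $a(E_i,X) \geq 0$ for a canonical $X$ so that all quantities involved are well-behaved, though this is not strictly needed for the inequality itself once $\nu(D) > 0$.

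Finally, I would close by noting how this plugs into Proposition \ref{GeneralNFInequalities}: applying the lemma with $\nu = \nu_f$ (whose weights $e_i$ are assumed nonnegative), using $a(\nu_f) = \sdeg(f^{-1}) - 1$ from Proposition \ref{degformula}, and substituting the value of $\nu_f(D/m)$ computed in Theorem \ref{NFEqualities} in each of the two cases, gives exactly the two displayed bounds after rescaling $D$ by $1/m$ (note $\nu_f(D/m) = \frac{1}{m}\nu_f(D)$ and $\ct(X, \frac{1}{m}D) = m\cdot\ct(X,D)$, so the factors of $m$ cancel).
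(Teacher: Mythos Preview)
Your proof is correct and essentially the same as the paper's. The paper phrases the weighted-sum step as a pigeonhole argument---some ratio $a(E_i,X)/\ord_{E_i}(D)$ must lie at or below the average $a(\nu)/\nu(D)$, else multiplying by $e_i$ and summing gives a contradiction---whereas you sum the canonicity inequalities $a(E_i,X)\geq c\,\ord_{E_i}(D)$ directly; the underlying computation is identical.
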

\begin{proof}
We show that $a(E_{i},X)/\ord_{E_{i}}(D) \leq a(\nu)/\nu(D) =: c$ for at least one of the $E_{i}$. If this were not the case then $a(E_{i},X) > c\ord_{E_{i}}(D)$ for all $E_{i}$. Hence 
\[c = \frac{\sum e_{i}a(E_{i},X)}{\sum_{i}e_{i}\ord_{E_{i}}(D)} > \frac{c\sum e_{i}\ord_{E_{i}}(D)}{\sum_{i}e_{i}\ord_{E_{i}}(D)} = c\]
Hence we can conclude that $\ct(X,D) \leq \frac{a(\nu)}{\nu(D)}$. 
\end{proof}

\indent We can now prove Theorem \ref{NFInequalities}. 

\begin{proof}[Proof of Theorem \ref{NFInequalities}]
We apply Proposition \ref{GeneralNFInequalities}. As $X$ has Picard number 1, every divisor is $\mathbb{Q}$-linearly equivalent to some multiple of $-K_{X}$. Moreover  by Proposition \ref{degbound} $d' > 1$. Finally all the $e_{i}$ are positive as $Y \to S$ is a Mori fiber space. Therefore the Proposition applies and we get the desired inequalities on the canonical thresholds. 
\end{proof}

\section{Constructing Links}

\indent The Noether-Fano inequalities allow us to construct singular divisors from a birational map $f:X \dashto Y$. Moreover, using \ref{NFEqualities} we can describe exactly how singular the divisors are using the weighted valuation $\nu_{f}$. When there is only a single divisor contracted by $f^{-1}$, this weighted valuation is an actual valuation. In that case, we can ask if the inequalities on canonical thresholds are actually equalities. \\
\indent We show that starting with $D \sim -mK_{X}$ a divisor or movable linear system on $X$ such that $\left(X,\frac{1}{m}D\right)$ is not canonical we can find a particular birational map $f:X \dashto Y$ realizing the canonical threshold. The techniques in this section are well known and are in fact how the first Sarkisov links out of $X$ are constructed in \cite{corti1995}.

\begin{prop}\label{blowup}
Suppose that $D$ is either a $\mathbb{Q}$-divisor or a non-free  movable $\mathbb{Q}$-linear system on $X$ with $c := \ct\left(X,D\right)$. Then there exists a divisorial contraction $\phi:\tilde{X} \to X$ such that 
$\tilde{X}$ is $\mathbb{Q}$-factorial and terminal, the exceptional divisor $E$ computes the canonical threshold $c$ of $(X,D)$, and the pair $(\tilde{X},cD_{\tilde{X}})$ is canonical, where $D_{\tilde{X}}$ denotes the strict transform of $D$ on $\tilde{X}$.
\end{prop}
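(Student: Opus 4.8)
The plan is to produce $\phi\colon\tilde X\to X$ by running a carefully chosen minimal model program. First I would recall that since $\ct(X,D)=c$, the pair $(X,cD)$ is canonical but not terminal, so there is at least one divisor $E$ over $X$ with $a(E,X)-c\cdot\ord_E(D)=0$; equivalently $E$ realizes the threshold. By the existence of terminal $\mathbb{Q}$-factorial modifications (via the MMP, e.g.\ Birkar--Cascini--Hacon--McKernan) applied to a log resolution, I can extract exactly the divisors that are crepant for $(X,cD)$. More precisely, take a log resolution $g\colon W\to X$ of $(X,D)$, write the crepant divisors for $(X,cD)$ among the exceptional ones, and run a $(K_W+cD_W+\sum(1-\epsilon)E_j)$-MMP relative to $X$ over the non-crepant exceptional divisors for a small $\epsilon>0$; this contracts every exceptional divisor with positive log discrepancy and keeps the crepant ones. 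The outcome $\tilde X\to X$ is $\mathbb{Q}$-factorial with terminal singularities (one chooses coefficients so the crepant divisors survive and nothing worse than terminal appears), and its exceptional divisors are precisely the crepant divisors of $(X,cD)$.

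The next step is to arrange that a \emph{single} divisor $E$ suffices and that $(\tilde X,cD_{\tilde X})$ is canonical. For the second point, note that by construction $K_{\tilde X}+cD_{\tilde X}=\phi^*(K_X+cD)$ is crepant over $X$, and since $(X,cD)$ is canonical and all $\tilde X$-exceptional divisors are crepant, the pair $(\tilde X,cD_{\tilde X})$ is again canonical — any divisor over $\tilde X$ is a divisor over $X$ with the same log discrepancy with respect to the pulled-back boundary. For the first point, if the MMP extracts several crepant divisors, I would further run a $K_{\tilde X}$-MMP over $X$ (or equivalently pick the extraction of just one of the crepant valuations using the same BCHK machinery applied to a single divisor), since $-K_{\tilde X}$ is $\phi$-nef once $\tilde X$ is terminal and $\rho(\tilde X/X)\ge1$; one can peel off extra divisors by successive $K$-negative contractions until exactly one exceptional divisor remains, and that divisor still computes $c$. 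Alternatively, and more cleanly, invoke directly the standard fact (Corti, \cite{corti1995}; see also Kollár's treatment) that any valuation computing a canonical threshold can be realized by a divisorial contraction from a $\mathbb{Q}$-factorial terminal variety; this is exactly the construction of the first Sarkisov link.

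The main obstacle is verifying that the output is \emph{terminal}, not merely canonical, and $\mathbb{Q}$-factorial, while still retaining the threshold-computing divisor. Terminality of $\tilde X$ does not follow formally from $(X,cD)$ being canonical; one needs the boundary coefficients in the auxiliary MMP to be chosen so that $(\tilde X,0)$ — not just $(\tilde X, cD_{\tilde X})$ — is terminal, which is where the hypothesis that $X$ is (implicitly, in the intended application) terminal and the flexibility in choosing $\epsilon$ enter. The $\mathbb{Q}$-factoriality is automatic from running the MMP in the $\mathbb{Q}$-factorial category. I expect the bulk of the write-up to be the bookkeeping of discrepancies showing that exactly the crepant-for-$(X,cD)$ divisors survive and that the surviving pair is canonical, with a remark that when $D$ is a non-free movable linear system the same argument applies verbatim since $D_{\tilde X}$ is again movable and $\ord_E(D)>0$ for the relevant $E$ by non-freeness.
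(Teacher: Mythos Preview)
Your approach is essentially the same as the paper's: extract exactly the crepant divisors of $(X,cD)$ via \cite{bchm} to get an intermediate model $X_2$ with $(X_2,cD_{X_2})$ terminal, then run an ordinary $K_{X_2}$-MMP over $X$ so that the last step is the desired divisorial contraction $\tilde X\to X$. Your worry about terminality is unnecessary---once only the crepant divisors are extracted, $(X_2,cD_{X_2})$ is automatically a terminal pair (every exceptional divisor over $X_2$ is exceptional over $X$ and non-crepant), hence $X_2$ is terminal, and the $K$-MMP preserves this.
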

\begin{proof}
First we consider a log resolution $X_{1} \to X$ of the pair $(X,cD)$. By \cite{bchm} we can find a birational morphism $X_{2} \to X$ whose exceptional divisors are exactly the exceptional divisors $E_{i}$ of $X_{1} \to X$ such that $a(E_{i},X,c\Delta) = 0$. The variety $X_{2}$ will be $\mathbb{Q}$-factorial and $(X_{2},cD_{X_{2}})$ will be terminal where $D_{2}$ denotes the strict transform of $D$
on $X_{2}$.\\
\indent To obtain a divisorial contraction to $X$ we run an ordinary $K_{X_{2}}$-MMP on $X_{2}$ relative to $X$. The last step of this MMP will be a divisorial contraction $\phi:\tilde{X} \to X$ with the desired properties. In particular $\tilde{X}$ is guaranteed to be terminal.
\end{proof}

\begin{prop}\label{LinkConstruction}
Suppose that $D \sim_{\mathbb{Q}} - mK_{X}$ is either a $\mathbb{Q}$-divisor or a non-free  movable $\mathbb{Q}$-linear system on $X$ a terminal Fano variety of Picard number 1. Let $c := \ct\left(X,D\right)$, and assume that $c < 1/m$. Then there exists one of the following  diagrams.
\begin{center}
\begin{tikzcd}
\tilde{X} \arrow[r,dashed] \arrow[d] & Y \arrow[d] & & \tilde{X} \arrow[r,dashed] \arrow[d] & \tilde{Y} \arrow[d] \\
X & S & & X & Y
\end{tikzcd}
\end{center}
Here $\tilde{X} \to X$ is a blowup, as constructed in \ref{blowup}, with $\rho(\tilde{X}) = 2$. The horizontal arrow $\tilde{X} \dashto Y$ or $\tilde{X} \dashto \tilde{Y}$ will be a sequence of flips. In the left diagram, termed type I, $Y \to S$ is a terminal Mori fiber space with $\dim(S) \geq 1$ and $\rho(Y) = 2$. In the right diagram, termed type II, $\tilde{Y} \to Y$ is a divisorial contraction to $Y$ a klt Fano variety of Picard number 1. Moreover if $D$ was a movable linear system, then $Y$ has terminal singularities. 
\end{prop}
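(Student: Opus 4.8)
The plan is to feed the divisorial contraction $\phi:\tilde X\to X$ produced by Proposition \ref{blowup} into a two-ray game and identify the output. First I would record the relevant numerics: since $X$ has Picard number $1$ and $\phi$ contracts a single divisor $E$, we have $\rho(\tilde X)=2$, so the effective (closed) cone of curves on $\tilde X$ has exactly two extremal rays. One of them is the ray contracted by $\phi$ (which is $K_{\tilde X}$-negative because $\tilde X$ is terminal and $E$ computes the canonical threshold $c<1/m$, forcing $a(E,X)<\tfrac{1}{m}\ord_E(D)\le \ord_E(-K_X)$-type positivity — more precisely, the hypothesis $c<1/m$ is exactly what makes $-K_{\tilde X}$ not $\phi$-nef become relevant; I would compute $K_{\tilde X}=\phi^\ast K_X + a(E,X)E$ and compare with the strict transform of a general member of $|-K_X|$ to see that $-K_{\tilde X}$ is not nef, hence there is another $K_{\tilde X}$-negative extremal ray $R$). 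So the game is set up: run a $(-K_{\tilde X})$-MMP, or equivalently the relative MMP for $R$.

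The second step is to play the two-ray game on the other ray $R$. By \cite{bchm} (finiteness of the relevant minimal models / the cone and contraction theorems in the $\mathbb{Q}$-factorial terminal setting) we may run this MMP; every step is a flip (since a divisorial contraction would drop the Picard number to $1$ and then the next step would be a Mori fiber space over a point, which is the type I case with $\dim S=0$, excluded because $X$ is already such — I would argue this cannot happen, e.g. by the usual argument that the resulting Fano would be birational to $X$ via a morphism, contradicting that $\phi$ was not an isomorphism, or handle it as a degenerate sub-case). After finitely many flips we reach $\tilde X'$ on which the transformed ray is contractible and gives either (i) a fiber-type contraction $Y\to S$ with $\dim S\ge 1$, or (ii) a divisorial contraction $\tilde Y\to Y$. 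In case (i) set $Y=\tilde X'$; it is terminal (flips preserve terminality) and $\rho(Y)=2$, and $Y\to S$ is a Mori fiber space — this is the type I diagram, with the horizontal map $\tilde X\dashrightarrow Y$ a sequence of flips. In case (ii), $Y$ has Picard number $1$; it is klt (an image of a terminal variety under a divisorial contraction, with the discrepancy computation keeping it klt) and $-K_Y$ is ample because $-K_{\tilde X'}$ is relatively ample over $Y$ and, being Picard number $1$, this globalizes — so $Y$ is a klt Fano of Picard number $1$, giving the type II diagram.

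The third step is the last sentence: if $D$ is a movable linear system then $Y$ is terminal. Here I would use that for $D$ movable the strict transform $D_{\tilde X'}$ on $\tilde X'$ is still movable, hence nef on the general fiber / has no fixed components, and in case (ii) the pair $(\tilde X', cD_{\tilde X'})$ is canonical with $c<1/m$, so $K_{\tilde X'}+cD_{\tilde X'}$ is a small perturbation of $K_{\tilde X'}$ and the divisorial contraction $\tilde Y\to Y$ is $(K_{\tilde X'}+cD_{\tilde X'})$-negative; pushing forward, the discrepancies of the exceptional divisor of $\tilde Y\to Y$ over $Y$ are computed against $(Y, cD_Y)$ and movability of $D_Y$ (so it does not pass through the center with high multiplicity) forces the discrepancy to stay $>0$, i.e. $Y$ is terminal.

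I expect the main obstacle to be the bookkeeping in the two-ray game: justifying that every step of the $(-K_{\tilde X})$-MMP is genuinely a flip (not a divisorial contraction) until the very last step, and that the last step is exactly one of the two listed types with the stated singularities — in particular the terminality claim in case (ii) when $D$ is movable, which is where the hypothesis on $D$ really gets used. The existence of the MMP itself is not an issue since it follows from \cite{bchm} applied to the $\mathbb{Q}$-factorial terminal variety $\tilde X$, and the identification of $c<1/m$ with "$-K_{\tilde X}$ is $\phi$-nef fails" is a direct discrepancy computation.
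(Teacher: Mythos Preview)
Your two-ray game instinct is right, but the MMP you propose does not quite work, and the fix is exactly the ingredient the paper uses. You want to run a $K_{\tilde X}$-MMP (your ``$(-K_{\tilde X})$-MMP''), but nothing prevents its first step from being $\phi$ itself: the $\phi$-contracted ray is $K_{\tilde X}$-negative (since $\tilde X\to X$ is a divisorial contraction of terminal varieties), so a $K_{\tilde X}$-MMP is free to contract it, land on $X$, and stop---giving you nothing. Your parenthetical attempt to rule this out (``the resulting Fano would be birational to $X$ via a morphism, contradicting that $\phi$ was not an isomorphism'') is circular: $\phi$ \emph{is} such a morphism. Relatedly, the sentence ``$-K_{\tilde X}$ is not nef, hence there is another $K_{\tilde X}$-negative extremal ray'' has the sign backwards; non-nefness of $-K_{\tilde X}$ produces a $K_{\tilde X}$-\emph{positive} curve, which is irrelevant.

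The paper resolves this by running the $(K_{\tilde X}+cD_{\tilde X})$-MMP instead. Since $E$ computes $c$, one has $K_{\tilde X}+cD_{\tilde X}\sim_{\mathbb Q}\phi^{\ast}(K_X+cD)$, so this divisor is $\phi$-trivial and the MMP is forced to go out the other ray. Moreover the hypothesis $c<1/m$ enters precisely here: it makes $-(K_X+cD)$ ample, hence $-(K_{\tilde X}+cD_{\tilde X})$ big and nef, so by \cite{bchm} the MMP exists and terminates in a Mori fiber space. The dichotomy $\rho(Y)=2$ versus $\rho(Y)=1$ then gives the two diagrams. For the last sentence, the paper's argument is simpler than your discrepancy estimate: if $D$ is movable then no component of $D_{\tilde Y}$ can be the divisor contracted by $\tilde Y\to Y$, and terminality of $Y$ follows directly.
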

\begin{proof}
Using Proposition \ref{blowup} take a blowup $\phi:\tilde{X} \to X$ using the pair $(X,cD)$. Note that the pair $-(K_{X} + cD)$ is ample by the assumption that $c < 1/m$. Now let $D_{\tilde{X}}$ denote the strict transform of $D$ on $\tilde{X}$. Then $(\tilde{X},cD_{\tilde{X}})$ is canonical and the pair $-(K_{\tilde{X}} + cD_{\tilde{X}}) \sim_{\mathbb{Q}} -\phi^{\ast}(K_{X} + cD)$ is big and nef. Now by \cite{bchm} we can run a $K_{\tilde{X}} + cD_{\tilde{X}}$-MMP, which will end in a Mori Fiber space $Y \to S$. \\
\indent As $\rho(\tilde{X}) = 2$, we see that either $\rho(Y) = 1$ or $\rho(Y) = 2$. If $\rho(Y) = 1$, the last step of the MMP will be a divisorial contraction $\tilde{Y} \to Y$ and $\dim(S) = 0$. In this case we get a link of type II. Moreover the map $X \dashto Y$ is not an isomorphism as if it were the exceptional divisor $E$ of $\phi$ would have to be contracted, so $\tilde{Y} = \tilde{X}$ as $-E$ is $\phi$-ample. On the other hand, $K_{\tilde{X}} + cD_{\tilde{X}}$ is $\phi$-trivial, so that $\phi$ will not be a $K_{\tilde{X}} + cD_{\tilde{X}}$ contraction. If $\rho(Y) = 2$ then we end up with a type I link. \\
\indent In the case that we have a link of type I, the variety $Y$ will be terminal. If we end up with a type II link then $\tilde{Y}$ will be terminal and $Y$ will be klt. Moreover if no component of $D$ is contracted by $\tilde{Y} \to Y$ then $Y$ is terminal as well. In particular if $D$ is a movable linear system then we get that $Y$ must be terminal. 
\end{proof}

\indent When all the varieties in the above diagram are terminal, these diagrams are called Sarkisov links of type I and II respectively. Any birational map between two Mori fiber spaces factors into a composition of Sarkisov links. The types of links occurring here are the only options for a link starting with a Fano variety of Picard number one. \\
\indent When $D$ is a divisor we have a chance to contract components and end up with a non-terminal singularity. We make the following definition so we can refer to this class of birational maps in the future.

\begin{defn}\label{Slinks}
A link of type II above where $Y$ is klt will be called a \textit{Sarkisov-like link of type II} with $Y$ klt.
\end{defn}

 The following proposition sums up the links we have constructed in terms of the weighted valuations $\nu_{f}$. 
 
 \begin{prop}\label{computecanonicalthresholds}
 Consider $X$ a terminal Fano variety of Picard number 1. 
 \begin{enumerate}
     \item Suppose that $D \sim_{\mathbb{Q}} -mK_{X}$ is a divisor with $c := \ct(X,D) < \frac{1}{m}$. Then there exists a birational map $f:X \dashto Y$ which is either a Sarkisov link of type I, a Sarkiov link of type II, or a Sarkisov type of type II with $Y$ klt. Moreover $c = \frac{a(\nu_{f})}{\nu_{f}(D)}$.
     \item Suppose that $\mathcal{M} \sim_{\mathbb{Q}} -mK_{X}$ is a movable linear system with $c := \ct(X,\mathcal{M}) < \frac{1}{m}$. Then there exists a birational map $f:X \dashto Y$ which is either a Sarkisov link of type I or a Sarkiov link of type II. Moreover $c = \frac{a(\nu_{f})}{\nu_{f}(\mathcal{M})}$.
 \end{enumerate}
 \end{prop}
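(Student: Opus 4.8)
The plan is to run the link construction of Proposition~\ref{LinkConstruction} and then observe that, for the resulting map $f$, the weighted valuation $\nu_f$ is an honest valuation supported on a single divisor. First I would apply Proposition~\ref{LinkConstruction}: since $c = \ct(X,D) < 1/m$, it produces a birational map $f\colon X \dashto Y$ that is a Sarkisov link of type I, a Sarkisov link of type II, or (in the divisor case) a Sarkisov-like link of type II with $Y$ klt; and when the input is a movable linear system the same proposition forces $Y$ to be terminal, so $f$ is an honest Sarkisov link of type I or II. In every case $f$ is assembled from the divisorial contraction $\phi\colon \tilde X\to X$ of Proposition~\ref{blowup}, whose exceptional divisor $E$ computes the canonical threshold, so $a(E,X) = c\cdot \ord_E(D)$; since $X$ is terminal and $c>0$, this also forces $\ord_E(D)>0$.

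The main step is to identify the prime divisors of $Y$ contracted by $f^{-1}$. In both diagrams of Proposition~\ref{LinkConstruction}, $f^{-1}\colon Y\dashto X$ factors through $\phi$ with every intermediate arrow an isomorphism in codimension one (a sequence of flips, possibly preceded by the divisorial contraction $\tilde Y\to Y$ read backwards). Hence a prime divisor of $Y$ is contracted by $f^{-1}$ exactly when its strict transform on $\tilde X$ equals the $\phi$-exceptional divisor $E$. In the type I case this is the single divisor $E_Y$, the strict transform of $E$ on $Y$. In the type II cases I would additionally invoke the non-isomorphism assertion already proved inside Proposition~\ref{LinkConstruction} --- namely that the divisor contracted by $\tilde Y\to Y$ is not (the strict transform of) $E$ --- to conclude that $E_Y$ is again a genuine prime divisor of $Y$ and the only one contracted by $f^{-1}$. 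Consequently, on any resolution $\Gamma$ of the graph of $f$ with projections $\pi,\rho$, the only $\pi$-exceptional divisor with nonzero $\rho$-pushforward is the strict transform of $E_Y$; all other $\pi$-exceptional divisors are $\rho$-exceptional and contribute weight $e_i=0$. Writing $\rho_\ast(\text{that divisor})\sim_{q,\mathbb{Q}} -eK_Y$, this gives $\nu_f = e\cdot\ord_E$.

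It then remains to see $e>0$. When $Y\to S$ is a Mori fiber space I would quote Proposition~\ref{degbound}: it yields $\sdeg(f^{-1}) = 1 + a(\nu_f) > 1$, so $a(\nu_f)>0$, hence $\nu_f\neq 0$ and $e\neq 0$; and $e\geq 0$ because $\rho_\ast E_Y$ is effective on a Mori fiber space. When $Y$ has Picard number one, $E_Y$ is a nonzero effective $\mathbb{Q}$-Cartier divisor with $E_Y\cdot(-K_Y)^{\dim Y-1}>0$, which gives $e>0$ at once. With this in hand,
\[
\frac{a(\nu_f)}{\nu_f(D)} \;=\; \frac{e\cdot a(E,X)}{e\cdot\ord_E(D)} \;=\; \frac{a(E,X)}{\ord_E(D)} \;=\; c,
\]
and the identical computation with $\mathcal{M}$ replacing $D$ settles the movable case.

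I expect the only real difficulty to lie in the second paragraph: verifying rigorously that $f^{-1}$ contracts precisely the one divisor $E_Y$, and in particular that a type II divisorial contraction never simply undoes the blowup $\phi$. Once $\nu_f$ is known to be a positive multiple of $\ord_E$, the conclusion is immediate from the fact that $E$ computes the canonical threshold; as a sanity check, the inequality $c\le a(\nu_f)/\nu_f(D)$ also follows from the lemma after Proposition~\ref{GeneralNFInequalities}.
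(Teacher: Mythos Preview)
Your proposal is correct and follows essentially the same route as the paper: apply Proposition~\ref{LinkConstruction}, observe that the resulting $f$ has $\nu_f$ supported on the single divisor $E$ extracted by $\phi$, and conclude $c = a(E,X)/\ord_E(D) = a(\nu_f)/\nu_f(D)$. The paper's proof simply asserts ``by definition $\nu_f = e\cdot\ord_E$'' without further comment; you have expanded this into the honest argument that $f^{-1}$ contracts exactly $E_Y$ (using the non-isomorphism clause in Proposition~\ref{LinkConstruction} for the type~II case) and that $e>0$, which the paper leaves implicit.
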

 
\begin{proof}
We apply the construction of Proposition \ref{LinkConstruction} to the pair $(X,cD)$ or $(X,c\mathcal{M})$ where $c$ denotes the canonical threshold of the pair. This gives the desired birational map $f:X\dashto Y$ to a Mori fiber space $q:Y \to S$. By construction the exceptional divisor $E$ of $\tilde{X} \to X$ computes the canonical threshold. Moreover by definition $\nu_{f} = e\cdot \ord_{E}$ and $a(\nu_{f}) = e\cdot a(E,X)$ where the strict transform of $E$ on $Y$, $E_{Y} \sim_{q,\mathbb{Q}} -eK_{Y}$. Hence we see that $\nu_{f}$ computes the canonical threshold. 
\end{proof}

\section{Global Canonical Thresholds}

In this section we give the proof of Theorem \ref{GlobalCanonicalThresholds} and its corollaries. We first gives the definition of a birationally rigid and superrigid variety as these will be our main source of applications.

\begin{defn}
A terminal Fano variety of Picard number 1 is \textit{birationally rigid} if for any birational map $f:X \dashto Y$ to a terminal Mori fiber space $q:Y \to S$, $Y = X$. $X$ is \textit{birationally superrigid} if moreover any such $f$ is an isomorphism. 
\end{defn}

\indent By \cite{CS08} this is equivalent to $\mct(X) \geq 1$ though Theorem \ref{NFInequalities} combined with Proposition \ref{computecanonicalthresholds}, give a proof as well. We now prove the general theorem on $\ct(X)$ and $\mct(X)$ assuming they are less than one. 

\begin{proof}[Proof of Theorem \ref{GlobalCanonicalThresholds}]
\indent First, given any birational map $f:X \dashto Y$, with $f \in \mathscr{M}$, take $\mathcal{M}_{Y}$ the pullback of a base point free linear system on $S$ by the Mori fiber space structure $q:Y \to S$. Denote by $\mathcal{M}_{X} = f_{\ast}^{-1}\mathcal{M}_{Y} \sim_{\mathbb{Q}} -mK_{X}$, the strict transform on $X$. Then by Theorem \ref{NFInequalities} 
\[\ct\left(X, \frac{1}{m}\mathcal{M}_{X}\right) \leq \frac{d'-1}{d'},\]
where $d' = \sdeg(f^{-1})$. Hence,
\[\ct(X),\mct(X) \leq \inf\left\{ \frac{d'-1}{d'}|f\in \mathscr{M}\right\}.\]
\indent Next we  take $f:X \dashto Y$ to be a map in $\mathscr{F}$. Taking a divisor $D \sim_{\mathbb{Q}} -mK_{X}$ on $X$ contracted by $f$, by Theorem \ref{NFInequalities}, we conclude that
\[\ct\left(X, \frac{1}{m}\mathcal{M}_{X}\right) \leq \frac{d'-1}{d'},\]
where $d' = \sdeg(f^{-1})$. Therefore, 
\[\ct(X) \leq \inf\left\{ \frac{d'-1}{d'}|f\in \mathscr{F}\right\}.\]
\indent Finally we can take $\mathcal{H}_{Y} \sim -\ell K_{Y}$ a base-point-free linear system on $Y$ as $-K_{Y}$ is ample. Letting $\mathcal{H}_{X} := f_{\ast}^{-1}\mathcal{H}_{Y} \sim -mK_{X}$ we see by \ref{NFInequalities}  that 
\[\ct\left(X, \frac{1}{m}\mathcal{H}_{X}\right) \leq \frac{d'-1}{d'-1/d},\]
where $d = \sdeg(f)$. Therefore, 
\[\mct(X) \leq \inf\left\{ \frac{d'-1}{d'-1/d}|f\in \mathscr{F}\right\}.\]
\indent Note that since $\mathscr{S}_{1} \subseteq \mathscr{M}$ and $\mathscr{S}_{2},\mathscr{S}_{2}^{+} \subseteq \mathscr{F}$ it is now clear that 
\[\ct(X) \leq  \inf\left\{\frac{d'-1}{d'}|f \in \mathscr{M},\mathscr{F}\right\} \leq  \inf\left\{\frac{d'-1}{d'}|f \in \mathscr{S}_{1},\mathscr{S}_{2},\mathscr{S}_{2}^{+}\right\}\]
and 

\begin{align*}
    \mct(X) & \leq\inf\left\{\frac{d'-1}{d'}|f \in \mathscr{M}\right\}\cup \left\{\frac{d'-1}{d'-1/d}|f \in \mathscr{F}\right\} \\
&\leq  \inf\left\{\frac{d'-1}{d'}|f \in \mathscr{S}_{1}\right\}\cup \left\{\frac{d'-1}{d'-1/d}|f \in \mathscr{S}_{2}\right\}
\end{align*}
We now show that these inequalities are in fact equalities. First take any divisor $D \sim_{\mathbb{Q}} -mK_{X}$ on $X$ with $\ct\left(X,D\right) < \frac{1}{m}$. Then by Proposition \ref{computecanonicalthresholds} there exists a birational map $f:X \dashto Y$ with $f \in \mathscr{S}_{1} \cup \mathscr{S}_{2} \cup \mathscr{S}_{2}^{+}$ such that 
\[\ct\left(X, \frac{1}{m}D\right) = \frac{a(\nu_{f})}{\nu_{f}(D)} \geq \frac{d'
-1}{d'},\]
which proves the equalities for $\ct(X)$. \\
\indent To prove the other set of equalities we take any movable linear $\mathcal{M} \sim_{\mathbb{Q}} -mK_{X}$ on $X$ with $\ct\left(X,\mathcal{M}\right) < \frac{1}{m}$. Then by Proposition \ref{computecanonicalthresholds} there exists a birational map $f:X \dashto Y$ with $f\in \mathscr{S}_{1} \cup \mathscr{S}_{2}$ such that 
\[\ct\left(X, \frac{1}{m}\mathcal{M}\right) = \frac{a(\nu_{f})}{\nu_{f}(\mathcal{M})} \geq \frac{d'
-1}{d'-1/d},\]
which proves the equalities for $\mct(X)$.
\end{proof}

We now prove Corollary \ref{mctrigid}. The idea is that if we compute (or at least can bound) all the degree of Sarkisov links out of a Fano variety $X$ then we can compute (or bound) $\mct(X)$.

\begin{proof}[Proof of Corollary \ref{mctrigid}]
Since $X$ is birationally rigid every Sarkisov link is a Type II link $f:X \dashto X$. Note that as $\Pic(X) = \Cl(X) = \mathbb{Z} \cdot -K_{X}$ we must have $\sdeg(f),\sdeg(f^{-1}) \in \mathbb{Z}$. Moreover as $X$ is terminal, $\sdeg(f^{-1}),\sdeg(f) \geq 2$. Therefore letting $d = \sdeg(f)$ and $d' = \sdeg(f^{-1})$ we conclude $\frac{d'-1}{d'-1/d} > \frac{d'-1}{d'} \geq \frac{1}{2}$. As this holds for all Sarkiov links, by Theorem \ref{GlobalCanonicalThresholds} $\mct(X) > \frac{1}{2}$. \\
\indent If $\Bir(X)$ is generated by involutions of Sarkisov degree $d$, then for such Sarkisov links we have $d = d'$. In particular, $\mct(X) = \frac{d-1}{d-1/d} = \frac{d}{d+1} \geq \frac{2}{3}$.
\end{proof}

\indent For the other corollaries we instead use bounds on $\ct(X)$ and $\mct(X)$ to prove nonexistence of birational maps to other Mori fiber spaces. 

\begin{proof}[Proof of Corollary \ref{ct1}]
Consider a birational map $f:X \dashto Y$ where $Y$ is a Fano variety of Picard number $1$ (with arbitrarily bad singularities). As both varieties are $\mathbb{Q}$-factorial and have Picard number $1$,  $f$ must contract some divisor $F$ on $X$. In particular as $X$ is terminal $d'= \sdeg(f^{-1}) > 1$ by Proposition  \ref{degformula}. This implies $\ct(X) \leq \frac{d'-1}{d'} < 1$ which contradicts the assumption that $\ct(X) = 1$.
\end{proof}

\begin{proof}[Proof of Corollary \ref{bsrmaps}]
Consider a birational map $f:X \dashto Y $ where $Y$ is a Fano variety together with a base-point-free linear system $\mathcal{H}_{Y} \sim -\ell K_{X}$ with $\ell <  1$. Note that the strict transform $f_{\ast}^{-1}\mathcal{H}_{X} \sim -\sdeg(f)\cdot \ell K_{X}$. Therefore as $\Cl(X) = \Pic(X) = \mathbb{Z} \cdot -K_{X}$,  $\sdeg(f) \cdot \ell$ must be an integer $\geq 1$. As $\ell < 1$, we  conclude $\sdeg(f) > 1$. Moreover as $X$ is terminal $\sdeg(f^{-1}) > 1$. In particular letting $d= \sdeg(f)$ and $d' = \sdeg(f^{-1})$, 
\[\mct(X) \leq  \frac{d'-1}{d'-1/d}  < 1\]
This contradicts the fact that $X$ is birationally superrigid. 
\end{proof}

\bibliographystyle{plain}
\bibliography{NFbib}

\begin{thebibliography}{10}

\bibitem{birkar2016singularities}
Caucher Birkar.
\newblock Singularities of linear systems and boundedness of fano varieties.
\newblock {\em arXiv preprint arXiv:1609.05543}, 2016.

\bibitem{bchm}
Caucher Birkar, Paolo Cascini, Christopher~D Hacon, and James McKernan.
\newblock Existence of minimal models for varieties of log general type.
\newblock {\em Journal of the American Mathematical Society}, 23(2):405--468,
  2010.

\bibitem{CS08}
Ivan~A Cheltsov and Konstantin~A Shramov.
\newblock Log canonical thresholds of smooth fano threefolds.
\newblock {\em Russian Mathematical Surveys}, 63(5):859, 2008.

\bibitem{corti1995}
Alessio Corti.
\newblock Factoring birational maps of 3-folds after sarkisov.
\newblock {\em J. Algebraic Geom.}, 4:223--254, 1995.

\bibitem{df2013}
Tommaso de~Fernex.
\newblock Birationally rigid hypersurfaces.
\newblock {\em Inventiones mathematicae}, 192(3):533--566, 2013.

\bibitem{hmsarkisov}
Christopher~D Hacon and James McKernan.
\newblock The sarkisov program.
\newblock {\em arXiv preprint arXiv:0905.0946}, 2009.

\bibitem{IM71}
Vasily~A Iskovskih and Ju~I Manin.
\newblock Three-dimensional quartics and counterexamples to the l{\"u}roth
  problem.
\newblock {\em Mathematics of the USSR-Sbornik}, 15(1):141, 1971.

\bibitem{iskovskikh2004noether}
Vasilij~A Iskovskikh.
\newblock On the noether--fano inequalities.
\newblock {\em arXiv preprint math/0412523}, 2004.

\bibitem{pukhlikov2018canonical}
Aleksandr~V Pukhlikov.
\newblock Canonical and log canonical thresholds of fano complete
  intersections.
\newblock {\em European Journal of Mathematics}, 4(1):381--398, 2018.

\bibitem{pukhlikovindex2}
Aleksandr~V Pukhlikov.
\newblock Birational geometry of singular fano hypersurfaces of index two.
\newblock {\em manuscripta mathematica}, 161(1):161--203, 2020.

\bibitem{stibitzzhuang}
Charlie Stibitz and Ziquan Zhuang.
\newblock K-stability of birationally superrigid fano varieties.
\newblock {\em arXiv preprint arXiv:1802.08381}, 2018.

\bibitem{tian87}
Gang Tian.
\newblock On k{\"a}hler-einstein metrics on certain k{\"a}hler manifolds with
  $c_1(m)> 0$.
\newblock {\em Inventiones mathematicae}, 89(2):225--246, 1987.

\bibitem{zhu2020higher}
Ziwen Zhu.
\newblock Higher codimensional alpha invariants and characterization of
  projective spaces.
\newblock {\em International Journal of Mathematics}, 31(02):2050012, 2020.

\end{thebibliography}

\end{document}